\pgfplotsset{compat=1.14}
\def\@tocline#1#2#3#4#5#6#7{\relax
  \ifnum #1>\c@tocdepth 
  \else
    \par \addpenalty\@secpenalty\addvspace{#2}%
    \begingroup \hyphenpenalty\@M
    \@ifempty{#4}{%
      \@tempdima\csname r@tocindent\number#1\endcsname\relax
    }{%
      \@tempdima#4\relax
    }%
    \parindent\z@ \leftskip#3\relax \advance\leftskip\@tempdima\relax
    \rightskip\@pnumwidth plus1em \parfillskip-\@pnumwidth
    #5\leavevmode\hskip-\@tempdima #6\relax
    \dotfill\hbox to\@pnumwidth{\@tocpagenum{#7}}\par
    \nobreak
    \endgroup
  \fi}
\let\oldtocsection=\tocsection
\let\oldtocsubsection=\tocsubsection
\renewcommand{\tocsection}[2]{\hspace{0em}\oldtocsection{#1}{#2}}
\renewcommand{\tocsubsection}[2]{\hspace{1.25em}\oldtocsubsection{#1}{#2}}
\newcommand{\sheafhom}{\mathscr{H}\text{\kern -3pt {\calligra\large om}}\,}
\newcommand{\sheafext}{\mathscr{E}\text{\kern -3pt {\calligra\large xt}}\,}
\newtheorem{theo}{Theorem}[section]
\newtheorem*{thm*}{Theorem}
\newtheorem{proposition}[theo]{Proposition}
\newtheorem*{proposition*}{Proposition}
\newtheorem*{prop*}{Proposition}
\newtheorem*{remark*}{Remark}
\newtheorem{lemma}[theo]{Lemma}
\newtheorem*{lemma*}{Lemma}
\newtheorem*{cor*}{Corollary}
\newtheorem*{claim*}{Claim}
\newtheorem*{details*}{Details}
\newtheorem*{recall*}{Recall}
\newtheorem*{ass*}{Assumption}
\newtheorem*{conj*}{Conjecture}
\newtheorem*{intprob*}{The Interpolation Problem}
\theoremstyle{definition}
\newtheorem{definition}[theo]{Definition}
\newtheorem*{definition*}{Definition}
\newtheorem*{deff*}{Definition}
\newtheorem*{problem*}{Problem}
\newtheorem*{prob*}{Problem}
\newtheorem*{ques*}{Question}
\newtheorem{note}[theo]{Note}
\newtheorem*{note*}{Note}
\newtheorem*{rep@theorem}{\rep@title}
\newcommand{\newreptheorem}[2]{%
\newenvironment{rep#1}[1]{%
 \def\rep@title{#2 \ref{##1}}%
 \begin{rep@theorem}}%
 {\end{rep@theorem}}}
\newtheorem{theorem}[theo]{Theorem}
\newtheorem*{theorem*}{Theorem}
\newcommand{\pr}{\operatorname{pr}}
\begin{document}
\title{The geometry of Hilbert schemes of two points on projective space}
\author{Tim Ryan}
\address{Department of Mathematics, University of Michigan}
\email{rtimothy@umich.edu}
\begin{abstract} In this paper, we give three bases for the cohomology groups of the Hilbert scheme of two points on projective space. Then, we use these bases to compute all effective and nef cones of higher codimensional cycles on the Hilbert scheme. Next, we compute the class in one of these bases of the Chern classes of tautological bundles coming from line bundles. Finally, we provide an application of these results to the degrees of secant varieties of complete intersections.
\end{abstract}
\maketitle
\setcounter{tocdepth}{2}
\vspace{-.5in}
\section{Introduction}
\medskip\noindent
In this paper, we will generalize several results known on Hilbert schemes of points on the complex projective plane to Hilbert schemes of two points on any complex projective space.
In particular, we will gives geometric bases for the Chow groups, compute the effective and nef cones of cycles, give the classes of Chern classes of tautological line bundles, and provide an application of these results by computing the degree of the secant variety to any  complete intersection subvariety of projective space that is not $1$-defective.

\medskip\noindent
Hilbert schemes of points on the projective plane have been extensively studied, see for example the books \cite{N,N2,G}. 
They have proved interesting in their own right and also as a base case or testing ground for more general results.
One limit on extending this usefulness to other Hilbert schemes has been the wild misbehavior of Hilbert schemes (of points) on higher dimensional projective spaces \cite{V}.
The new results of the paper by Skjelnes and Smith \cite{SS} narrows this obstruction by characterising which Hilbert schemes on projective space are smooth.
Their result gives the six infinite families of smooth Hilbert schemes on (each) projective space (and a seventh trivial family).
On $\mathbb{P}^n$, one of these families are the Hilbert schemes with Hilbert polynomials\[p_{r,\lambda_{r-1}}(t) = \left(\sum_{i=1}^{r-2} \binom{t+n-i}{n-1}\right) +\binom{t+\lambda_{r-1}-r-1}{\lambda_{r-1}-1}+1\] 
where $r\geq 2$ and $\lambda_{r-1} \geq 1$ are integers.
Setting $r = 2$ and $\lambda_{r-1}=1$ gives the Hilbert scheme of two points on projective space, denoted $\mathbb{P}^{n[2]}$.
In this sense, the Hilbert scheme of two points is the base case for either of the two sets of base cases for this family.
This paper aims to completely study these as a basis for future study of this entire family.

\medskip\noindent
We start by considering the Chow ring of $\mathbb{P}^{n[2]}$.
The dimension of these rings as complex vector spaces can be computed to be $3\binom{n+1}{2}$  \cite{C,C2}.  
Our first result gives three bases for these vector spaces with respect to two fixed flags, $\mathbb{P}^0 = F_0 \subset \cdots\subset F_n = \mathbb{P}^n$ and $\mathbb{P}^0 = F_0' \subset \cdots\subset F_n' = \mathbb{P}^n$ such that $\mathrm{dim}(F_i\cap F_j') = i-1$ if $i\leq j$. 
To state that result, we first must define the following classes, which we define set-theoretically and take the reduced induced scheme structure.
\begin{align*}
    A_{i,j} =& \{Z \in \mathbb{P}^{n[2]} : \mathrm{supp}(Z) \cap F_i \neq \varnothing \text{ and } Z \subset F_j\}\\    
    A_{i,j}' =& \{Z \in \mathbb{P}^{n[2]} :\mathrm{dim}\left(\mathrm{Span}(Z,F_{i})\right) \leq i+1, Z \subset F_j, \text{ and }Z \text{ is nonreduced}\}
\end{align*}
\begin{align*}
    B_{i,j} =& \{Z \in \mathbb{P}^{n[2]} : \mathrm{supp}(Z) \subset F_i, Z \subset F_{j+1}, \text{ and } Z \text{ is nonreduced}\}\\
    B_{i,j}' =& \{Z \in \mathbb{P}^{n[2]} : Z\cap F_i \neq \varnothing,  Z \cap F_j' \neq \varnothing, \text{ and } Z \subset F_i\cup F_j'\}\\
    C_{i,j} =& \{Z \in \mathbb{P}^{n[2]} : \mathrm{dim}\left(\mathrm{Span}(Z,F_{i-1})\right) \leq i \text{ and } Z \subset F_j\}\\
\end{align*}
Note, by the \textit{span} of a set of subschemes we mean the smallest dimensional linear subspace containing all of them.

\begin{theorem}
Define the sets $A =\{A_{i,j}:0\leq i <j \leq n\}$, $A' =\{A_{i,j}':0\leq i < j \leq n\}$, $B = \{B_{i,j}:0\leq i \leq j <n\}$, $B' = \{B_{i,j}':0\leq i \leq j <n\}$, and $C= \{C_{i,j}:0< i \leq j \leq n\}$.
Similarly, define the unions of these sets $\mathcal{B}_{\text{BB}} = A\cup B\cup C$, $\mathcal{B}_{\text{ES}} = A' \cup B \cup C$, and $\mathcal{B}_{\text{MS}} = A \cup B' \cup C$.
Then $\mathcal{B}_{\text{BB}}$, $\mathcal{B}_{\text{ES}}$, and $\mathcal{B}_{\text{MS}}$ are each a basis for the Chow ring $A\left(\mathbb{P}^{n[2]}\right)$.
\end{theorem}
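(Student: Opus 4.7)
The plan is to prove $\mathcal{B}_{\text{BB}}$ is a basis via the Białynicki-Birula (BB) cell decomposition, then to deduce the other two basis statements by triangular changes of basis. Let $\mathbb{C}^{*}$ act on $\mathbb{P}^n$ by giving $x_i$ weight $-i$, so that $F_i\setminus F_{i-1}$ is the attracting cell of $p_i=[0\!:\!\cdots\!:\!1\!:\!\cdots\!:\!0]$ (with the $1$ in slot $i$); this action lifts to $\mathbb{P}^{n[2]}$. A direct classification shows the torus-fixed locus consists of the $\binom{n+1}{2}$ reduced pairs $\{p_i,p_j\}$ with $i<j$ together with the $n(n+1)$ non-reduced length-two subschemes $Z_{i,k}$ supported at $p_i$ with tangent direction $[\partial_{x_k}]$ for $k\ne i$, for a total of $3\binom{n+1}{2}$ fixed points. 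Since $\mathbb{P}^{n[2]}$ is smooth and projective, the BB theorem yields an additive basis of $A(\mathbb{P}^{n[2]})$ consisting of the closures of the attracting cells.

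The central step is to identify each cell closure with a class in $A\cup B\cup C$. Working in the blowup presentation $\mathbb{P}^{n[2]}=\mathrm{Bl}_{\Delta}(\mathbb{P}^n\times\mathbb{P}^n)/S_2$ and choosing local coordinates at each fixed point, one computes the $\mathbb{C}^{*}$-weights on the tangent space and then identifies the attracting cell geometrically. At $\{p_i,p_j\}$ the tangent space decomposes as $T_{p_i}\mathbb{P}^n\oplus T_{p_j}\mathbb{P}^n$ with $i+j$ positive weights, and the cell closure is the irreducible locus $A_{i,j}$. At $Z_{i,k}$ the weight in the direction normal to the exceptional divisor has the sign of $w_k-w_i$: when $k>i$ it is negative, so the cell lies inside the exceptional divisor with support flowing in $F_i$ and tangent direction flowing in $\mathbb{P}(TF_k)$, giving closure $B_{i,k-1}$; when $k<i$ it is positive, so the cell contains reduced pairs $\{z_1,z_2\}\subset F_i$, and a short local-coordinate calculation shows such a pair flows to $Z_{i,k}$ precisely when $y_m(z_1)=y_m(z_2)$ for $k<m<i$, equivalently when the secant line $\overline{z_1z_2}$ meets $F_k$, giving closure $C_{k+1,i}$. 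A count of positive weights confirms the cell dimensions are $i+j$, $i+k-1$, and $i+k+1$ respectively, matching those of the proposed classes; irreducibility then pins down the identifications and proves the $\mathcal{B}_{\text{BB}}$ statement.

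For $\mathcal{B}_{\text{ES}}$ and $\mathcal{B}_{\text{MS}}$, the $B$ and $C$ classes (respectively $A$ and $C$) are shared with $\mathcal{B}_{\text{BB}}$, so it suffices to show that in each codimension the transition matrix between primed and unprimed classes, modulo the shared classes of that codimension, is invertible. I would expand each $A'_{i,j}$ or $B'_{i,j}$ in the BB basis: for $B'_{i,j}$ the auxiliary flag $F_\bullet'$ can be specialized onto $F_\bullet$, and the limit cycle decomposes with $B_{i,j}$ as the leading component; for $A'_{i,j}$, which lies inside the exceptional divisor $E\cong\mathbb{P}(T\mathbb{P}^n)$, the expansion is read off from the projective bundle formula for $A(E)$. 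In both cases the resulting matrix is triangular with respect to a natural partial order on index pairs, hence invertible. The main obstacle will be precisely this triangularity check: keeping track of the secondary components produced when $B'_{i,j}$ specializes and when the Span condition for $A'_{i,j}$ is intersected with the BB cells, and selecting an ordering in which those components are always strictly subordinate. The case $k<i$ of the BB cell identification is the other delicate point, as it requires careful control of how the limit tangent direction of a varying reduced pair is determined by its slowest-decaying coordinate.
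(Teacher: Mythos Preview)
Your treatment of $\mathcal{B}_{\text{BB}}$ is essentially the paper's: the paper also lifts a $\mathbb{C}^*$-action, lists the monomial fixed ideals (your reduced pairs $\{p_i,p_j\}$ and nonreduced $Z_{i,k}$ correspond exactly to the ideals $I_{i,j}$, $J_{i,j}$, $K_{i,j}$), and then reads off the cell closures $A_{i,j}$, $B_{i,j}$, $C_{i,j}$ from the form of a general ideal degenerating to each. Your blowup/weight presentation is a pleasant alternative bookkeeping for the same computation.

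For $\mathcal{B}_{\text{ES}}$ and $\mathcal{B}_{\text{MS}}$, however, you take a genuinely different route. The paper does \emph{not} produce a triangular change of basis from $A'$ to $A$ or from $B'$ to $B$. Instead it computes, for each pair of complementary codimensions, the intersection matrix between $\mathcal{B}_{\text{MS}}^k$ and $\mathcal{B}_{\text{ES},2n-k}$, and shows it is block-diagonal with full-rank diagonal blocks (the key ingredient being Proposition~\ref{prop: zeros}: $A\cdot B=A\cdot C=A'\cdot B'=A'\cdot C=B\cdot B'=C\cdot C=0$, and the mixed products $A\cdot A'$, $B\cdot C$, $B'\cdot C$ vanish except at complementary indices). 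Duality then simultaneously yields that both sets are bases \emph{and} that they span the nef and effective cones. Your degeneration/specialization argument would establish the basis property alone and is in principle sound, but note that the paper's method avoids precisely the obstacle you flag: there is no need to track secondary limit components or pick a compatible ordering, since the vanishing of the off-block intersections is a clean dimension count on the defining flags. If your goal is only the basis statement, your approach is reasonable; if you also want the cones, the intersection-pairing argument is strictly more efficient.

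One small caution on your proposed $A'_{i,j}$ expansion: since $A'_{i,j}\subset E$ and every $B_{k,l}\subset E$, it is tempting to think the expansion lives in $\mathrm{span}(B\cup C)$, but it cannot (else $A'\cdot A=0$, contradicting the paper's diagonal block). So the projective bundle formula on $E$ alone will not finish the job; you will also need to understand the Gysin pushforward $A(E)\to A(\mathbb{P}^{n[2]})$ in terms of the BB basis, which reintroduces $A$-classes.
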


\medskip\noindent
We will refer to $\mathcal{B}_{\text{BB}}$, $\mathcal{B}_{\text{ES}}$, and $\mathcal{B}_{\text{MS}}$ as the BB basis (for Bia\l ynicki-Birula), the ES basis (for Ellingsrud and Str\o mme), and the MS basis (for Mallavibarrena and Sols), respectively, and we will refer to the subset of a basis $\mathcal{B}_*$ which is the codimension $k$-cycles by $\mathcal{B}_*^k$ and which is the dimension $k$-cycles by $\mathcal{B}_{*,k}$.
Three bases may seem excessive, but they are each natural for different reasons. 
The BB basis is natural from the perspective of a $\mathbb{C}^{*}$-action as its elements are the closures of the loci which limit to each fixed point.
The perspective from which the ES and MS bases are natural is made clear by the following theorem which computes the effective and nef cones of cycles on $\mathbb{P}^{n[2]}$.
In order to state the theorem, we note that \[\mathrm{dim}\left(A_{i,j}\right) =\mathrm{dim}\left(A_{i,j}'\right) =\mathrm{dim}\left(B_{i,j}\right) = \mathrm{dim}\left(B_{i,j}'\right) =\mathrm{dim}\left(C_{i,j}\right) = i+j,\]

\begin{theorem}
\label{thm: cones}
The effective cone of dimension $k$ cycles, denoted $\mathrm{Eff}_k\left(\mathbb{P}^{n[2]}\right) \subset N_{k}(\mathbb{P}^{n[2]})$, is spanned by  $\mathcal{B}_{\mathrm{ES},k}$, and the nef cone of codimension $k$ cycles, denoted $\mathrm{Nef}^k\left(\mathbb{P}^{n[2]}\right)  \subset N^{k}(\mathbb{P}^{n[2]})$, is spanned by $\mathcal{B}_{\mathrm{MS}}^k$.
\end{theorem}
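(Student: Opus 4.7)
\medskip\noindent
The plan is to prove both statements of Theorem \ref{thm: cones} simultaneously by exploiting the duality between $\mathrm{Eff}_k(\PP^{n[2]})$ and $\mathrm{Nef}^k(\PP^{n[2]})$ under the intersection pairing $N_k\otimes N^k\to \RR$. Concretely, I will establish three ingredients: (a) every element of $\mathcal{B}_{\mathrm{ES},k}$ is the class of an irreducible subvariety, hence effective; (b) every element of $\mathcal{B}_{\mathrm{MS}}^k$ is nef; and (c) under a suitable bijection $E_i \leftrightarrow M_i$ between the two sets, the intersection pairing matrix is diagonal with positive entries. Granting (a), (b), and (c), for $\alpha\in\mathrm{Eff}_k$ with ES expansion $\alpha=\sum_i a_i E_i$, nefness of the matched element $M_i$ forces $\alpha\cdot M_i = a_i\,(E_i\cdot M_i)\ge 0$, so $a_i\ge 0$; this gives the effective cone statement. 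A symmetric argument, beginning from $\beta\in\mathrm{Nef}^k$ expanded in the MS basis and paired with the effective ES elements, handles the nef cone.

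\medskip\noindent
Ingredient (a) is essentially by construction: each $A'_{i,j}$, $B_{i,j}$, and $C_{i,j}$ is defined as the reduced closure of a locally closed locus of length-two schemes cut out by incidence, containment, span, and nonreduced conditions, and irreducibility follows by exhibiting a surjection from an obviously irreducible parameter space (e.g.\ incidence varieties built from a projective subspace together with its Hilbert scheme of two points). For ingredient (b), the classes $A_{i,j}$, $B'_{i,j}$, and $C_{i,j}$ are each defined in terms of a flag (or a pair of flags), and since $\gl_{n+1}$ acts transitively on (pairs of) flags producing cohomologous cycles, the family of $\gl_{n+1}$-translates of any such representative sweeps out $\PP^{n[2]}$. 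Consequently, for any irreducible representative $V$ of an effective class of complementary dimension, a generic translate meets $V$ generically transversely, giving $V\cdot M\ge 0$ and hence nefness of $M$.

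\medskip\noindent
For ingredient (c), I will compute the pairing by using the two fixed flags $F_\bullet$ and $F'_\bullet$ satisfying $\dim(F_i\cap F'_j)=i-1$ for $i\le j$, employing $F_\bullet$ to define the ES cycle and $F'_\bullet$ to define the MS cycle so that the underlying flag conditions are transverse. The pairing then reduces to counting length-two subschemes obeying both sets of geometric conditions against a transverse flag pair, and a direct geometric analysis identifies a unique matching partner for each class: the $A$ and $A'$ classes pair against one another, the $B$ and $B'$ classes pair against one another, and the $C$ classes pair against themselves, with each matched intersection number equal to $1$ and all other pairings vanishing because the combined conditions cut out an empty or lower-dimensional intersection locus.

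\medskip\noindent
The main obstacle will be the case analysis for ingredient (c), especially verifying the vanishing of cross-type pairings (such as $A\cdot B$ or $C\cdot A'$ in appropriate codimensions) and ruling out excess intersection along the nonreduced diagonal or along the lower-dimensional flag intersections. The span condition defining $A'$ and $C$ and the nonreduced condition defining $A'$ and $B$ can enlarge the naive intersection locus, so the transversality $\dim(F_i\cap F'_j)=i-1$ must be used in an essential scheme-theoretic way, rather than merely for dimension counting, to confirm the diagonal structure of the pairing matrix.
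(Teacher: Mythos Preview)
Your overall strategy coincides with the paper's: effectiveness of the $\mathcal{B}_{\mathrm{ES}}$ classes is immediate since they are closures of explicit loci; nefness of the $\mathcal{B}_{\mathrm{MS}}$ classes follows from a Kleiman-type translate argument (the paper phrases this via the $\mathrm{PGL}(n+1)$ orbit structure and cites \cite{MeS}); and the cones are then pinned down by exhibiting the two collections as dual under the intersection pairing.

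The genuine gap is in ingredient (c): your claimed matching is wrong. You assert that $B$ pairs with $B'$ and that $C$ pairs with itself, but in complementary codimension both of these pairings vanish identically. For $B_{i,j}\cdot B'_{k,l}$: any $Z$ in both loci is nonreduced (from the $B_{i,j}$ condition), so its single support point must lie in the $(k-1)$-plane where the two linear spaces defining $B'_{k,l}$ meet, as well as in $F_i$; for transverse flags this forces $i+(k-1)\ge n$, which together with $i\le j$, $k\le l$, and $i+j+k+l=2n$ is impossible. A similar incidence count gives $C_{i,j}\cdot C_{k,l}=0$ (Proposition~\ref{prop: zeros}). The correct block-diagonal structure of the pairing, as in Table~\ref{fig: MS/ES intersections}, is
\[
A' \longleftrightarrow A,\qquad B \longleftrightarrow C,\qquad C \longleftrightarrow B',
\]
with positive diagonal entries (not necessarily all equal to $1$; only positivity is needed for the cone argument). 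Once the matching is corrected, your duality argument goes through unchanged.
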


\medskip\noindent
These bases are also useful in computing the Chern classes of tautological bundles, where tautological bundles are the push-forward to the Hilbert scheme of bundles on the universal family pulled back from $\mathbb{P}^n$.
Formally, the universal family, which can be defined set theoretically as \[\mathbb{P}^{n[1,2]} =\{(p,Z) \subset \mathbb{P}^n \times \mathbb{P}^{n[2]}: p\in \mathrm{supp}(Z)\},\]
comes equipped with projections to the Hilbert scheme and to projective space.
\[\begin{diagram}
&& \mathbb{P}^{n[2,1]} && \\
& \ldTo^{\pr_2} & & \rdTo^{\pr_1} &\\
\mathbb{P}^{n[2]} &&&& \mathbb{P}^{n}
\end{diagram}\]
Given a coherent sheaf $\mathcal{F}$ on projective space, the \textit{tautological sheaf} is the bundle \[V^{[2]} = (\pr_{2})_*(\pr_1^*(\mathcal{F}))\] that is gotten by pulling the sheaf back to the universal family and pushing it forward to the Hilbert scheme.
When $\mathcal{F}$ is a line bundle, these have been of extensive interest, e.g. \cite{L,MOP,Vo,MOP2,J}.
As $\pr_2$ is a finite flat morphism of degree 2, $\mathcal{F}^{[2]}$ is a rank 2 vector bundle in that case.
Thus, it only has two non-zero Chern classes, which we compute in the MS basis.
\begin{theorem}\label{thm: chern}
In terms of the MS basis, we have 
\[c_1\left(\mathcal{O}_{\mathbb{P}^n}(d)^{[2]}\right) = (d-1)A_{2n-1,2n}+C_{2n-1,2n} \text{ and}\] 
\[c_2\left(\mathcal{O}_{\mathbb{P}^n}(d)^{[2]}\right) =\binom{d}{2}B_{2n-1,2n-1}'+dC_{2n-1,2n-1}\]
for $n>1$.
If $n=1$, the classes are the same without the second terms, as those classes no longer exist.
\end{theorem}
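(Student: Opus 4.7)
The plan is to realize both Chern classes as explicit effective cycles coming from sections of $\mathcal{O}_{\mathbb{P}^n}(d)$, then pin down the MS-basis coefficients by pairing against well-chosen test families. Any global section $s \in H^0(\mathbb{P}^n, \mathcal{O}(d))$ induces a section $s^{[2]}$ of $\mathcal{O}(d)^{[2]}$ whose fiber at $Z$ is $s|_Z$; for generic $s$ the zero locus is therefore $\{Z : Z \subset V(s)\} = V(s)^{[2]}$, representing $c_2(\mathcal{O}(d)^{[2]})$. Two generic sections $s, t$ analogously produce a section $s^{[2]} \wedge t^{[2]} \in H^0(\det\mathcal{O}(d)^{[2]})$ whose vanishing divisor is the incidence locus with the pencil $\langle s, t\rangle$ of degree-$d$ hypersurfaces, representing $c_1(\mathcal{O}(d)^{[2]})$.

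For $c_1$, the dimension formula $\dim A_{i,j} = \dim C_{i,j} = i+j$ and the description of $\mathcal{B}_{\text{MS}}$ identify exactly two codimension-$1$ basis elements when $n > 1$: one $A$-type hyperplane-incidence divisor and one $C$-type divisor. Writing $c_1$ as their $\mathbb{Z}$-linear combination, I would evaluate both sides against two test curves: the pencil $\Gamma_1 \cong \mathbb{P}^1$ of length-$2$ schemes supported at a fixed point $p$ with tangent direction varying along a line in $\mathbb{P}(T_p\mathbb{P}^n)$, and the curve $\Gamma_2 \cong \mathbb{P}^1$ of reduced schemes $\{p,q\}$ with $p$ fixed and $q$ running over a line $\ell \not\ni p$. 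The degree $\mathcal{O}(d)^{[2]} \cdot \Gamma_i$ is obtained by restricting the bundle to the universal family over $\Gamma_i$ (a double cover of $\mathbb{P}^1$) and applying Riemann--Roch, while the $A$- and $C$-intersections are direct incidence counts. Solving the resulting $2\times 2$ linear system yields the coefficients $d-1$ and $1$.

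For $c_2$, the codimension-$2$ part of $\mathcal{B}_{\text{MS}}$ has four elements for $n > 1$ (one $A$-type, one $B'$-type, and two $C$-types), collapsing to the single class $B'_{0,0}$ when $n = 1$. The latter case is a sanity check: $\mathbb{P}^{1[2]} \cong \mathbb{P}^2$, and $c_2 = \binom{d}{2}$ is simply the number of unordered pairs among the $d$ zeros of a generic degree-$d$ polynomial on $\mathbb{P}^1$. For general $n$, I would express $[V^{[2]}]$ in the four basis elements and pair against four test surfaces---such as $\ell^{[2]} \cong \mathbb{P}^2$ for a generic line $\ell$, a surface of pairs with one point on a line and one on a complementary plane, and boundary surfaces inside the nonreduced locus. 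Each intersection $[V^{[2]}] \cdot S$ reduces to an enumerative count on $V$: the pairing $[V^{[2]}] \cdot \ell^{[2]} = \binom{d}{2}$ counts pairs of intersection points of $V\cap\ell$, while the coefficient $d$ on the surviving $C$-type class tracks the $d$ intersections of $V$ with a line in a fixed hyperplane. Arranging the test surfaces so that the pairing matrix with the MS basis is upper-triangular in a suitable order reduces the calculation to back-substitution.

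The principal obstacle is the $c_2$ step: finding four test surfaces whose pairing matrix with the four MS-basis classes is non-degenerate and whose intersections with $V^{[2]}$ are individually computable. Particular care is required around the nonreduced locus, since $V^{[2]}$ contains nonreduced schemes supported on $V$ that contribute distinctly to the pairings with $C$-type versus $B'$-type classes and must be separated. Once the pairing matrix is in hand, the stated coefficients emerge from the standard incidence and secant counts above.
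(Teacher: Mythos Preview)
Your proposal is correct and essentially mirrors the paper's approach: represent $c_1$ and $c_2$ as degeneracy loci of generic sections (the pencil-incidence divisor and $V(s)^{[2]}$, respectively), then determine the MS-basis coefficients by pairing with complementary-dimension test cycles and solving a small linear system. The only tactical difference is that the paper tests directly against the dimension-$1$ and dimension-$2$ MS basis elements themselves ($A_{0,1},\,B'_{0,1}$ for $c_1$; $A_{0,2},\,B'_{0,2},\,B'_{1,1},\,C_{1,1}$ for $c_2$), leveraging the already-computed MS self-intersection matrix, and obtains each pairing by a direct enumerative count rather than the Riemann--Roch computation you propose---which would also work but is more elaborate than necessary.
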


\medskip\noindent
We can apply these results to the study of secant varieties.
Recall that the (1st)-secant variety of a subvariety $X$ of projective space is the closure of the union of the secant lines to that variety, denoted  by $\mathrm{Sec}(X)$.
When $\dim(X)=m$ and $X \subset \mathbb{P}^n$ with $2m+1<n$, the expected dimension of $\mathrm{Sec}(X)$ is $2m+1$.
When the secant variety has the expected dimension, it is natural to ask what is its degree.
Indeed, studying the degree of the secant variety has a long and extensive history dating back to at least Giambelli and continuing to the present, see for example \cite{Gi,B,T,D,La,C-J,Z,CR}.
In particular, Theorem 19 of \cite{La} computes $\mu_1(X) \dot \deg(\mathrm{Sec}(X))$ when $X$ is smooth and connected, where $\mu_1(X)$ is the 1-st secant order of $X$, i.e. the number of secant lines to $X$ passing through a general point of $\mathrm{Sec}(X)$.
Results for singular curves and surfaces are also known \cite{AMS,AMS2,Ca}
Using our results on the Hilbert scheme, we can remove the hypotheses of smooth and connected from that theorem, but at the expense of only considering complete intersections.

\begin{theorem}\label{thm: sec}
Let $X$ be a subvariety of $\mathbb{P}^n$ of dimension $m$ with $2m+1<n$ which is not $1$-defective and which is the complete intersection of hypersurfaces of degree $d_1$, $\dots$, and $d_{n-m}$.
The degree of the secant variety, $\mathrm{Sec}(X)$, is \[\deg\left(\mathrm{Sec}(X)\right) = \frac{1}{\mu_1(X)} \sum_{k=m+1}^{n-m} \left(\prod_{j \in \{i_1,\dots,i_k\}} \left(\prod_{l\not \in \{i_1,\dots,i_k\}} \binom{d_{j}}{2} d_l 2^{k-1-m}\right)\right)\] 
where the products are taken over all choices of $k$ integers from the set $\{1,\dots,n-m\}$.
\end{theorem}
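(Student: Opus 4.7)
The plan is to convert $\deg(\mathrm{Sec}(X))$ into a single intersection number on $\mathbb{P}^{n[2]}$ and then evaluate it using Theorem \ref{thm: chern}. Since $X$ is not $1$-defective, $\mathrm{Sec}(X)$ has the expected dimension $2m+1$, so a general flag element $\Lambda=F_{n-2m-1}$ of complementary dimension meets $\mathrm{Sec}(X)$ transversely in $\deg(\mathrm{Sec}(X))$ reduced points, each lying on exactly $\mu_1(X)$ secant lines by definition of the first secant order. Counting incidence pairs $(Z,y)$ with $Z\in X^{[2]}$ and $y\in\Lambda\cap\mathrm{Span}(Z)$ in two ways, this equals the number of $Z\in X^{[2]}$ whose span meets $\Lambda$. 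The locus in $\mathbb{P}^{n[2]}$ of $Z$ with $\mathrm{Span}(Z)\cap F_{i-1}\neq\varnothing$ is exactly $C_{i,n}$, since the defining condition $\dim\mathrm{Span}(Z,F_{i-1})\leq i$ is equivalent to $\mathrm{Span}(Z)\cap F_{i-1}\neq\varnothing$ (and $j=n$ imposes no containment); specializing $i=n-2m$ gives
\[
\mu_1(X)\deg(\mathrm{Sec}(X)) \;=\; \int_{\mathbb{P}^{n[2]}}[X^{[2]}]\cdot[C_{n-2m,n}].
\]

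Next I would express $[X^{[2]}]$ in terms of tautological Chern classes. For a hypersurface $Y=V(f)\subset\mathbb{P}^n$ of degree $d$, the equation $f$ maps under the evaluation $H^0(\mathcal{O}(d))\otimes\mathcal{O}\to\mathcal{O}(d)^{[2]}$ to a section of this rank-$2$ bundle whose zero scheme is $Y^{[2]}$, whence $[Y^{[2]}]=c_2(\mathcal{O}(d)^{[2]})$. Because $X=\bigcap_{i} Y_i$ is a transverse complete intersection and the expected codimension of $X^{[2]}$ equals its actual codimension $2(n-m)$,
\[
[X^{[2]}] \;=\; \prod_{i=1}^{n-m}c_2\bigl(\mathcal{O}(d_i)^{[2]}\bigr).
\]
Substituting the MS-basis expression from Theorem \ref{thm: chern} into each factor and expanding the product over subsets $S\subseteq\{1,\ldots,n-m\}$ (the elements of $S$ recording which factors contribute their $\binom{d_i}{2}B'_{2n-1,2n-1}$ summand rather than their $d_iC_{2n-1,2n-1}$ summand) turns the above integral into a weighted sum of monomial intersection numbers of the form $(B'_{2n-1,2n-1})^{|S|}(C_{2n-1,2n-1})^{n-m-|S|}\cdot C_{n-2m,n}$.

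The remaining and most delicate step is to evaluate these monomial intersection numbers on $\mathbb{P}^{n[2]}$: I expect them to equal $2^{k-1-m}$ when $k=|S|\geq m+1$ and to vanish when $k\leq m$. Inserting this evaluation into the weighted sum and dividing by $\mu_1(X)$ reproduces the stated formula, the range $k\in[m+1,n-m]$ emerging automatically from the vanishing. The main obstacle is precisely this evaluation: one cannot read it off from the classes alone, and I would attempt it by moving all the flags used to define the $B'$ and $C$ factors into general position and performing a direct set-theoretic count. The factor $2^{k-1-m}$ should arise combinatorially from the binary choice, at each $B'$ factor, of which of the two points of $Z$ lies on each of the two flags defining that $B'$ (modulo the $S_2$ symmetry of the unordered pair $\{p_1,p_2\}$); the vanishing for $k\leq m$ should reflect that too few $B'$'s leave too many collinearity constraints from the $C$-cycles to be simultaneously satisfiable. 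As a sanity check, the case $m=0$ collapses via $d_i+2\binom{d_i}{2}=d_i^2$ to $\binom{N}{2}$ with $N=\prod_i d_i$, which is the expected number of secant lines joining the $N$ general points cut out by $X$.
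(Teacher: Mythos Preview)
Your reduction to the intersection number $\mu_1(X)\deg(\mathrm{Sec}(X)) = C_{n-2m,n}\cdot\prod_i c_2(\mathcal{O}(d_i)^{[2]})$, and the subsequent expansion over subsets $S\subseteq\{1,\dots,n-m\}$, is exactly the paper's argument. Two remarks.

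First, a cosmetic point: the indices you quote from Theorem~\ref{thm: chern} should be $B'_{n-1,n-1}$ and $C_{n-1,n-1}$, not $B'_{2n-1,2n-1}$ and $C_{2n-1,2n-1}$; the latter lie outside the admissible index range $0\le i\le j<n$. This is a typo in the introduction's statement of Theorem~\ref{thm: chern} that you have inherited; the restatement in Section~4 has the correct indices.

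Second, the step you flag as the main obstacle --- evaluating $(B'_{n-1,n-1})^{k}(C_{n-1,n-1})^{n-m-k}\cdot C_{n-2m,n}$ --- is indeed the entire content of the proof, and your binary-choice heuristic is not yet a proof: one must still check that for each of the $2^{k-1}$ assignments the resulting incidence locus is a single reduced point, and the containment condition $Z\subset F_i\cup F'_j$ in the definition of $B'$ together with possible nonreduced contributions makes a direct set-theoretic count delicate. The paper does \emph{not} attempt this count. Instead it proves a separate inductive lemma (Lemma~\ref{lem: products}) expressing the pure power $B'^{\,k}_{n-1,n-1}$ explicitly in the MS basis as $2^{k-1}\bigl(B'_{n-k,n-k}+\sum_i(B'_{n-k-i,n-k+i}-A_{n-k-i,n-k+i})\bigr)$; the induction step requires passing through the BB basis (writing $B'_{n-k-i,n-k+i}$ in terms of $B_{n-k-i,n-k+i}$ and $A_{n-k-i,n-k+i}$) and a local-coordinate multiplicity computation for products such as $B'_{n-1,n-1}\cdot B_{n-k-i,n-k+i}$. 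Once that lemma is in hand, multiplying by powers of $C_{n-1,n-1}$ and then by $C_{n-2m,n}$ reduces to the already-established diagonal pairings of Table~\ref{fig: MS/ES intersections}, and both the value and the vanishing range $k\le m$ drop out. Your $m=0$ sanity check is correct and a good consistency test, but it does not replace this computation.
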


\medskip\noindent
The presence of $\mu_1(X)$ is not a large obstruction as it is expected to be 1 for most varieties.

\medskip\noindent
A similarly argument can be run to remove the smoothness hypothesis of the results concerning the number of tangent lines through a general point of projective space when $2m+1=n$, see \cite{PS,HR}.

\medskip\noindent
The paper is organized as follows. 
Section 2 defines the sets which will form the bases and shows that the set $\mathcal{B}_{\text{BB}}$ is the basis given by the $\mathbb{C}^*$-action.
Section 3 computes the complementary codimension intersection matrix of the sets $\mathcal{B}_{\text{ES}}$ and $\mathcal{B}_{\text{MS}}$ to compute the nef and effective cones of cycles on $\mathbb{P}^{n[2]}$ and show that they are bases.
Section 4 computes the Chern classes of tautological bundles on that Hilbert scheme coming from line bundles on projective space.
Finally, Section 5 discusses an application of these results to secant varieties.

\medskip\noindent
The author would like to thank Gregory Taylor, Alexander Stathis, and C\'esar Lozano Huerta for helpful discussions.

\section{Bases for the Chow groups}
\medskip\noindent
In this section, we will describe the three sets of geometric loci, $\mathcal{B}_{\text{BB}}$, $\mathcal{B}_{\text{ES}}$, and $\mathcal{B}_{\text{MS}}$, and show that $\mathcal{B}_{\text{BB}}$ is the basis for the Chow groups of $\mathbb{P}^{n[2]}$ made by the BB-cells.
Note, the Chow groups are also the cohomology groups as the Hilbert scheme is a smooth rational variety.

\subsection{The BB basis}
\medskip\noindent
In order to compute those bases, we first compute the dimension of the Chow groups as complex vector spaces.
By Bia{\l}ynicki-Birula \cite{BB,BB2}, this dimension is the number of fixed points of the inherited $\mathbb{C}^*$-action; note, there is obviously more than one such action, but any action $(x_0,\dots,x_n) \mapsto (\lambda^{a_0} x_0,\dots, \lambda^{a_n} x_n)$ such that $a_0<<\cdots <<a_n$ will work for our purposes.
The fixed points of this action are precisely the monomial ideals with constant Hilbert polynomial equal to two.
In order to count the monomial ideals, we first consider the general form of any ideal of two points.
As any two points are contained in a line, but not in a single point, we can see that the ideal must contain $n-1$ linear generators and a single quadratic generator.
As the only linear monomials are the variables and the only quadratic monomials are of the form $x_i^2$ or $x_ix_j$, the monomial ideals are of one the following three forms where $i<j$:
\[I_{i,j}=(x_ix_j,x_0,\cdots,x_{i-1},\hat{x_i},x_{i+1},\cdots,x_{j-1},\hat{x_j},x_{j+1},\cdots, x_n),\]
\[J_{i,j} = (x_0,\cdots,x_{i-1},x_{i+1},\cdots,x_{j-1},x_j^2,x_{j+1},\cdots, x_n)\text{, or}\]
\[K_{i,j} = (x_0,\cdots,x_{i-1},x_i^2,x_{i+1},\cdots,x_{j-1},\hat{x_j},x_{j+1},\cdots, x_n).\]

%
\medskip\noindent
It is straightforward to count that there are $\binom{n+1}{2}$ monomial ideals of each type.
Thus, we have shown the following lemma.
\begin{lemma} $\mathrm{dim}_{\mathbb{C}}\left( A\left(\mathbb{P}^{n[2]}\right)\right) = 3\binom{n+1}{2}$.
\end{lemma}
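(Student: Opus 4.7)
My plan is to invoke the Białynicki-Birula decomposition, identified above as the natural tool here. Since $\mathbb{P}^{n[2]}$ is smooth and the chosen torus action with weights $a_0 \ll \cdots \ll a_n$ has isolated fixed points (the weights separate the finitely many candidate monomial ideals), BB produces an affine paving with one cell per fixed point; this immediately yields
\[\dim_{\mathbb{C}} A(\mathbb{P}^{n[2]}) = \#\{\mathbb{C}^*\text{-fixed points of } \mathbb{P}^{n[2]}\}.\]

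The next step is to count those fixed points, which are precisely the monomial ideals of $\mathbb{C}[x_0,\dots,x_n]$ cutting out a length-$2$ subscheme of $\mathbb{P}^n$. The classification given in the excerpt shows every such ideal has $n-1$ linear monomial generators together with a single quadratic monomial generator; once one selects the two variables $x_i, x_j$ (with $i<j$) absent from the linear part, the quadratic generator must be one of $x_ix_j$, $x_j^2$, or $x_i^2$, producing the three families $I_{i,j}$, $J_{i,j}$, $K_{i,j}$. Geometrically, these correspond to the reduced pair of coordinate points $\{P_i, P_j\}$ and to the two nonreduced length-$2$ schemes supported at $P_i$ and $P_j$ respectively, each tangent to the coordinate line joining them.

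Each of the three families is indexed by the set of pairs $(i,j)$ with $0 \leq i < j \leq n$, which has cardinality $\binom{n+1}{2}$. Summing gives $3\binom{n+1}{2}$ fixed points, hence the lemma. The only step requiring any real argument is the classification of length-$2$ monomial ideals into three types, which is routine bookkeeping about which quadratic monomials can appear once the linear part of the ideal has been fixed; the BB reduction and the final arithmetic are then purely formal.
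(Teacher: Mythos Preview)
Your proposal is correct and follows essentially the same approach as the paper: invoke Bia{\l}ynicki-Birula to reduce the computation to counting $\mathbb{C}^*$-fixed points, identify these as the monomial length-$2$ ideals, classify them into the three families $I_{i,j}$, $J_{i,j}$, $K_{i,j}$ indexed by pairs $0\le i<j\le n$, and sum to obtain $3\binom{n+1}{2}$.
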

\noindent This can also be derived from the generating functions given in \cite{C2}.

\medskip\noindent
Furthermore, Bia{\l}ynicki-Birula \cite{BB2} shows that the cycles giving a basis for the Chow ring are the classes of (the closure of the) cells of the cellular decomposition of the Hilbert scheme induced by the action.
The cells are loci which limit to each fixed point as the parameter $\lambda$ goes to zero.
These are sometimes known as \textit{Bia{\l}ynicki-Birula cells}.
These are related to the \textit{Gr\"obner cells} of Conca and Valla \cite{CV}.
We want to describe each cell geometrically.
In order to do that, let us consider a general ideal $I$ of two points which in the cell corresponding to $I_{i,j}$, $J_{i,j}$, or $K_{i,j}$.
Then we may simplify the generators of the ideal into the following form
\[I=(ax_i^2+bx_ix_j+cx_j^2,x_0-a_0x_i-b_0x_j,\cdots,x_{i-1}-a_{i-1}x_i-b_{i-1}x_j,
x_{i+1}-b_{i+1}x_j,\cdots,x_{j-1}-b_{i-1}x_j,
x_{j+1},\cdots, x_n).\]
If $a \neq 0$, then this ideal limits to $K_{i,j}$.
There are $2+2i+(j-1-i) = i+j+1$ dimensions of such ideals, so the cell containing $K_{i,j}$ is dimension $i+j+1$. 
Denote its class by $C_{i+1,j}$.
If $a =0$ and $b\neq 0$, then this ideal limits to $I_{i,j}$.
There are $1+2i+(j-1-i) = i+j$ dimensions of such ideals so the cell containing containing $I_{i,j}$ is dimension $i+j$. 
Denote its class by $A_{i,j}$.
Finally, if $a=b=0$, then this ideal limits to $J_{i,j}$.
There are $2i+(j-1-i) = i+j-1$ dimensions of such ideals so the cell containing containing $J_{i,j}$ is dimension $i+j-1$.
Denote its class by $B_{i,j-1}$.

\medskip\noindent
We want to describe this cells more geometrically.
To formally do this, fix two flags $\mathbb{P}^0 = F_0 \subset \cdots\subset F_n = \mathbb{P}^n$ 
where the $F_i$ is spanned by the points $e_0,\cdots,e_i$ and $\mathbb{P}^0 = F_0' \subset \cdots\subset F_n' = \mathbb{P}^n$ 
where the $F_i'$ contains is defined by the points $e_n,e_0,\cdots,e_{i-1}$.

\medskip\noindent
Geometrically, the cell $A_{i,j}$ is the locus where the pair of points intersects an $i$-plane and lies inside a $j$-plane which contains that $i$-plane.
Set-theoretically, we write  
\[A_{i,j} = \{Z \in \mathbb{P}^{n[2]} : \mathrm{supp}(Z) \cap F_i \neq \varnothing \text{ and } Z \subset F_j\}\]
for $0\leq i< j \leq n$.
We can see this from the form of the ideal $I$ that limits to $I_{i,j}$.
It is clear that the pair of points is contained in the $j$-plane defined by $x_{j+1} = \dots x_n =0$ since those terms are in the ideal.
Next, it is also clear that the pair of points intersect the $i$-plane defined by $x_{i+1} = \dots x_n =0$ since each of those variables is a zero-divisor in the polynomial ring modded out by the ideal, which means that they vanish at an associated point of the ideal, i.e. at one of the points of the support.
As mentioned above, $A_{i,j}$ is dimension $i+j$ which can be seen geometrically as you can pick any one point of $F_i$ and any one point of $F_j$.
Putting this last two facts together, we see that there are $\mathrm{min}(\lceil \frac{k}{2}\rceil,\lceil n-\frac{k}{2}\rceil)$ cycles $A_{i,j}$ of dimension $k$ for $0\leq k \leq 2n$.
Note, summing over all dimensions, we see that there are $\binom{n+1}{2}$ cycles $A_{i,j}$.

\medskip\noindent
Geometrically, $B_{i,j}$ is the locus where the pair of points are a double point whose support is on an $i$-plane and which is contained in a $(j+1)$-plane which contains that $i$-plane.
Set theoretically, we write 
\[B_{i,j} = \{Z \in \mathbb{P}^{n[2]} : \mathrm{supp}(Z) \subset F_i, Z \subset F_{j+1}, \text{ and } Z \text{ is nonreduced}\}.\]
for $0\leq i \leq j < n$.
We can see this from the form of the ideal $I$ that limits to $J_{i,j+1}$.
The presence of $x_{j+1}^2$ as a generator forces the pair to be non-reduced.
Next, it is clear that the pair of points is supported on the $i$-plane defined by $x_{i+1} = \dots x_n =0$ since those terms are in the radical of the ideal.
Finally, it is also clear that the pair of points is contained in the $(j+1)$-plane defined by $x_{j+2} = \dots x_n =0$ since those terms are in the ideal.
$B_{i,j}$ is dimension $i+j$ as you can pick the support anywhere in $F_i$ and then pick any tangent direction in  $F_{j+1}$. 
Putting this last two facts together, we see that there are $\mathrm{min}(\lceil\frac{k+1}{2}\rceil,\lceil n-\frac{k+1}{2}\rceil)$ cycles $B_{i,j}$ of dimension $k$ for $0\leq k \leq 2n$.
Note, summing over all dimensions, we see that there are $\binom{n+1}{2}$ cycles $B_{i,j}$.

\medskip\noindent
Geometrically, $C_{i,j}$ is the cycle where the pair of points are linearly dependent with $i$ points in general position in a $j$-plane and are contained in that $j$ plane. 
Set theoretically, we write 
\[C_{i,j} = \{Z \in \mathbb{P}^{n[2]} : \mathrm{dim}\left(\mathrm{Span}(Z,F_{i-1})\right) \leq i \text{ and } Z \subset F_j\}\]
for $0< i\leq  j \leq n$ .
We can see this from the form of the ideal $I$ that limits to $K_{i-1,j}$.
It is clear the pair of points is contained in the $j$-plane defined by $x_{j+1} = \dots x_n =0$.
Secondly, knowing one of the pair of the points (or the support if it is non-reduced), determines the coefficients on every linear polynomial in the ideal containing $x_k$ with $k>i$ so the points are linearly dependent with the coordinate points $e_k$, for $k<i$ as desired.
$C_{i,j}$ is dimension $i+j$ as you can pick the first point anywhere in $F_j$ and then pick the second point in the $i$-plane determined by the support and the $i$ general points. 
Putting this last two facts together, we see that there are $\mathrm{min}(\lceil \frac{k-1}{2}\rceil,\lceil n- \frac{k-1}{2}\rceil)$ cycles $C_{i,j}$ of dimension $k$ for $0\leq k \leq 2n$.
Note, summing over all dimensions, we see that there are $\binom{n+1}{2}$ cycles $C_{i,j}$.

\medskip\noindent
Since this cycles are the cells of the Bia{\l}ynicki-Birula cellular decomposition, they form a basis for the Chow ring.
In other words, if we define the sets $A =\{A_{i,j}:0\leq i <j \leq n,\}$,, $B = \{B_{i,j}:0\leq i \leq j <n\}$, and $C= \{C_{i,j}:0< i \leq j \leq n\}$ and \[\mathcal{B}_{\text{BB}} = A \cup B \cup C,\]
then the following proposition is immediate.

\begin{proposition}
$\mathcal{B}_{\text{BB}}$ is a basis for $A\left(\mathbb{P}^{n[2]}\right)$, $\mathrm{dim}\left(A^{2n}\left(\mathbb{P}^{n[2]}\right)\right)=\mathrm{dim}\left(A^0\left(\mathbb{P}^{n[2]}\right)\right) = 1$, and \[\mathrm{dim}\left(A^k\left(\mathbb{P}^{n[2]}\right)\right) = \mathrm{min}\left(\lceil\frac{k}{2}\rceil,\lceil n-\frac{k}{2}\rceil\right)+\mathrm{min}\left(\lceil\frac{k+1}{2}\rceil,\lceil n-\frac{k+1}{2}\rceil\right)+\mathrm{min}\left(\lceil\frac{k-1}{2}\rceil,\lceil n-\frac{k-1}{2}\rceil\right)\] for $1\leq k\leq 2n-1$.
\end{proposition}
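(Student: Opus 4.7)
\medskip\noindent
The plan is to invoke Bia{\l}ynicki-Birula's theorem directly. Because $\mathbb{P}^{n[2]}$ is a smooth projective variety carrying a $\mathbb{C}^*$-action with finite fixed locus (the monomial ideals $I_{i,j}$, $J_{i,j}$, $K_{i,j}$ enumerated above), \cite{BB,BB2} produces a cellular decomposition whose cells are indexed by the fixed points and whose closures give an integral basis of $A(\mathbb{P}^{n[2]})$. The identification of each cell with one of the geometric loci $A_{i,j}$, $B_{i,j}$, $C_{i,j}$ is exactly what the three paragraphs preceding this statement establish: the cell attracted to $I_{i,j}$ is the closure of $A_{i,j}$, the one attracted to $J_{i,j+1}$ is the closure of $B_{i,j}$, and the one attracted to $K_{i-1,j}$ is the closure of $C_{i,j}$. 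Thus $\mathcal{B}_{\mathrm{BB}} = A \cup B \cup C$ is literally the basis produced by the Bia{\l}ynicki-Birula decomposition, which proves the first claim.

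\medskip\noindent
The extremal equalities $\dim A^{0} = \dim A^{2n} = 1$ are immediate: $A^{0}$ is spanned by the fundamental class and $A^{2n}$ by the class of a point, and the BB decomposition confirms this via the unique top-dimensional cell $C_{n,n} = \mathbb{P}^{n[2]}$ and the unique $0$-dimensional cell $B_{0,0}$.

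\medskip\noindent
For the general formula with $1 \leq k \leq 2n-1$, the task reduces, by Poincar\'e duality on the smooth projective variety $\mathbb{P}^{n[2]}$, to enumerating BB cells of dimension $2n-k$ in each of the three families $A$, $B$, $C$. This amounts to counting integer pairs $(i,j)$ with $i+j = 2n-k$ subject to the appropriate range condition $0 \leq i < j \leq n$, $0 \leq i \leq j < n$, or $0 < i \leq j \leq n$. Each count is an elementary case split on the parity of $k$ and on whether $2n-k$ exceeds $n$; the three counts evaluate to exactly the three minimum expressions appearing in the statement, the $\pm 1$ shifts in the inner arguments arising from the three different range conditions on $(i,j)$. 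Summing the three counts yields the asserted equality. The main (and only) obstacle is the bookkeeping with the ceiling functions; all of the geometric content has already been absorbed into Bia{\l}ynicki-Birula.
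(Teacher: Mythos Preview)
Your proposal is correct and follows essentially the same route as the paper: the proposition is stated there as ``immediate'' after the preceding paragraphs, and your write-up simply packages that discussion---invoking Bia{\l}ynicki-Birula for the basis, identifying the unique cells in the extreme dimensions, and counting index pairs in each family for the general formula. The one cosmetic difference is that you route the count through Poincar\'e duality (counting cells of dimension $2n-k$ to get $\dim A^k$), whereas the paper directly records the number of dimension-$k$ cells and tacitly uses that the resulting expression is symmetric under $k\mapsto 2n-k$; either way the arithmetic is the same.
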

\medskip\noindent
We will call $\mathcal{B}_{\text{BB}}$ the BB-basis.


\subsection{Other Families}
\medskip\noindent
We will also need two other families of cycles.
The first family is where the pair of points are a double point which is linearly dependent (as a double point) with $i+1$ points in general position in a $j$-plane and are contained in that $j$ plane. 
Set theoretically, we write
\[A_{i,j}' = \{Z \in \mathbb{P}^{n[2]} :\mathrm{dim}\left(\mathrm{Span}(Z,F_{i})\right) \leq i+1, Z \subset F_j, \text{ and }Z \text{ is nonreduced}\}.\]
for $0\leq i< j \leq n$. 
$A_{i,j}'$ is dimension $i+j$ as you can pick the support anywhere in $F_j$ and then pick any tangent direction in the $i+1$-plane determined by the support and the $i+1$ general points. 
The same counts as for $A_{i,j}$ give the number of these cycles in each dimension.

\medskip\noindent
The second family is where the pair of points intersect an $i$-plane and a $j$-plane, which intersect in an $(i-1)$-plane with have $i\leq j$ and are contained in the union of the two.
Set theoretically, we write
\[B_{i,j}' = \{Z \in \mathbb{P}^{n[2]} : Z\cap F_i \neq \varnothing,  Z \cap F_j' \neq \varnothing, \text{ and } Z \subset F_i\cup F_j'\}.\]
for $0\leq i\leq j < n$. 
$B_{i,j}'$ is dimension $i+j$ as you can pick any one point of $F_i$ and any one point of $F_j'$. 
The same counts as for $B_{i,j}$ give the number of these cycles in each dimension.

\medskip\noindent
Similarly to the three families of BB cells, we define the sets $A' =\{A_{i,j}':0\leq i < j \leq n\}$ and $B' = \{B_{i,j}':0\leq i \leq j <n\}$.
We can then define the two sets 
\begin{align}
\label{MS} \mathcal{B}_{MS} &= A \cup B' \cup C\text{ and}\\
\label{ES} \mathcal{B}_{ES} &= A' \cup B \cup C.
\end{align}
In the following section, we will show these are bases of the Chow ring and as such we will refer to them as the MS basis and ES basis from now on.

\begin{note}
By convention, we list cells in dimension $k$ by increasing order on the first index, but we list cells in codimension $k$ by decreasing order on the first index.
\end{note}

\section{Effective and Nef Cones of Cycles}
\medskip\noindent
In this section, we compute the effective and nef cones of cycles on $\mathbb{P}^{n[2]}$.
To do that, we want to show that $\mathcal{B}_{\text{MS}}$ and $\mathcal{B}_{\text{ES}}$ are dual with respect to the intersection product (by which we mean their intersection matrix in each dimension is diagonal ).
As a consequence, we will see that they are bases and that they span the extremal rays of the nef and effective cones of cycles, respectively.

\subsection{Intersections}
\medskip\noindent
Let's first show which of the relevant intersections are zero.
This will depend on the indices of the cycles.
\begin{definition}
We say that two cycles $A_{i,j}/ B_{i,j}/C_{i,j}$ and $A_{k,l}/ B_{k,l}/C_{k,l}$ have \textit{complementary indices} if $k = n-i$ and $l = n-j$.
\end{definition}

\medskip\noindent
We can now state the proposition.

\begin{proposition}\label{prop: zeros}
Let $0\leq m \leq n$.
Then we have the intersections 
\begin{align*}
    &A_{i,j}\cdot B_{k,l} = 0,
    &\hspace{.7in}A_{i,j}\cdot C_{k,l} = 0,
    &\hspace{.7in}A_{i,j}'\cdot B_{k,l}' = 0,\\ 
    &A_{i,j}'\cdot C_{k,l} = 0,
    &\hspace{.7in}B_{i,j}\cdot B_{k,l}' = 0 ,
    &\hspace{.7in}C_{i,j}\cdot C_{k,l} = 0
\end{align*}
when both classes exist and have complementary codimensions $m$ and $2n-m$, respectively.
Similarly, we have 
\begin{align*}
    &A_{i,j}\cdot A_{k,l}' = 0,
    &\hspace{.7in}B_{i,j}\cdot C_{k,l} = 0,
    &\hspace{.7in}B_{i,j}'\cdot C_{k,l} = 0
\end{align*} 
unless they have complementary indices.
\end{proposition}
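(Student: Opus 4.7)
\medskip\noindent
The plan is to reduce each intersection computation to a set-theoretic calculation by moving one of the two cycles to general position. Each of the loci $A_{i,j}$, $A'_{i,j}$, $B_{i,j}$, $B'_{i,j}$, $C_{i,j}$ is defined using the fixed flag $F_\bullet$ (and, for $B'$, also $F'_\bullet$); replacing those flags with a second flag $G_\bullet$ in general position with respect to $F_\bullet$ produces a rationally equivalent cycle, since $\gl_{n+1}$ acts transitively on pairs of flags in general position. Hence to compute $X\cdot Y$ it suffices to intersect $X$ (defined using $F_\bullet$) with $Y$ (defined using a generic $G_\bullet$) set-theoretically, and to show that the resulting set is empty, or supported at the locus where some discrete condition on indices is satisfied.

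\medskip\noindent
For each pair $(X,Y)$ I translate the defining conditions on $Z\in X\cap Y$ into linear-algebra conditions on a reduced pair $\{p,q\}$ or a double point at $p$ with tangent line $L$. The recurring ingredients are: containment $Z\subset F_a\cap G_b$ (forcing $Z$ into a linear subspace of generic dimension $a+b-n$, which is empty for $a+b<n$); incidence $\mathrm{supp}(Z)\cap F_a\neq\varnothing$ (forcing one support point of $Z$ into $F_a$); nonreducedness (collapsing the pair to a single supported point with a tangent direction); and the span conditions defining $C_{k,l}$ and $A'_{k,l}$, which reduce, via the line through $Z$, to the statement that $L$ meets a prescribed $(k-1)$- or $k$-plane. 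For each case I sum the generic dimensions of the parameters (the support point, the other point or the tangent direction, any line-incidence data) and compare with $\dim\mathbb{P}^{n[2]}=2n$.

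\medskip\noindent
For the six equalities in the first group, the range restrictions on the indices ($i<j$ for $A,A'$; $i\leq j<n$ for $B,B'$; $0<i\leq j$ for $C$) combine with the complementary codimension relation $i+j+k+l=2n$ to produce a strict inequality in the dimension count. For example, for $A_{i,j}\cdot B_{k,l}$ the conditions force the nonreduced pair to be supported in $F_i\cap G_k$ with tangent direction inside $F_j\cap G_{l+1}$, so $i+k\geq n$ and $j+l\geq n$; together with $i+j+k+l=2n$ both must be equalities, but then $k-l=j-i>0$ contradicts $k\leq l$. Analogous arithmetic contradictions handle the other five cases. For the three assertions in the second group, the same analysis yields a parameter space of dimension $i+j+k+l-2n=0$ (in complementary codimension) that is actually empty unless each dimension inequality is sharp, which forces $i+l=n$ and $j+k=n$, exactly the complementary index condition.

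\medskip\noindent
The main obstacle is the careful handling of the span conditions defining $C_{k,l}$ and $A'_{k,l}$, since these do not reduce directly to intersections of flags but to the condition that a line through a chosen point meets a specified linear subspace. I will parameterize such a line as the join of the given point with a point of the auxiliary subspace and then intersect with the appropriate flag. Degenerate configurations, where a would-be reduced pair collapses to a double point (placing $Z$ on the exceptional divisor), must be verified separately; fortunately, the nonreducedness conditions present in $B$, $B'$ and $A'$ interact with the other constraints precisely enough to enforce the same dimension inequality on these degenerate strata as on the generic ones, so no extra contributions appear.
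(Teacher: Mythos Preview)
Your proposal is correct and follows essentially the same approach as the paper: move one flag to general position, translate membership in each locus into incidence/containment conditions on linear subspaces, and combine the resulting inequalities (such as $i+k\geq n$, $j+l\geq n$) with the complementary-codimension relation $i+j+k+l=2n$ and the index restrictions to force either a contradiction (first six cases) or the complementary-index equalities (last three). The paper's proof is slightly less explicit than yours about the general-position move and the degenerate (nonreduced) strata, but the arithmetic on indices is identical case by case.
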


\begin{proof}
\medskip\noindent
Let us first consider the intersections which are always zero.
Recall that $A_{i,j}$ is the locus of pairs of points which intersect a fixed general $i$-plane and are contained in a fixed general $j$-plane containing the $i$-plane and that $B_{k,l}$ is the locus of nonreduced pairs of points supported on a fixed general $k$-plane and contained in an $(l+1)$-plane.
In order for a pair of points to lie in both of these loci, we must have that $i+k\geq n$ and $j+l+1\geq n+1$.
Since $i< j$, $k\leq l$, and $i+j+k+l = 2n$ (as they are complementary codimension), we see that $i+k=n$ and $j+l=n$.
Then $n-i = k \leq l = n-j$, so $j \leq i$. 
This is not possible as $i< j$, so $A_{i,j}$ and $B_{k,l}$ are disjoint, so \[A_{i,j} \cdot B_{k,l} = 0.\]

\medskip\noindent
Similarly, recall that $C_{k,l}$ is the locus of pairs of points which are linearly dependent with $k$ points and are contained in an $l$-plane which contains those points.
In order for a pair of points to lie in $A_{i,j}$ and $C_{k,l}$, we see that $i+l\geq n$ and $j+k \geq n+1$ since the $k$-plane must intersect the $j$-plane in at least 2 points.
Then $2n \geq i+j+k+l \geq 2n+1$ which is impossible so we get  \[A_{i,j}\cdot C_{k,l} = 0.\]

\medskip\noindent
Further, recall that $A_{i,j}'$ is the locus of nonreduced pairs of points which are linearly dependent with $(i+1)$ points and are contained in an $j$-plane which contains those points and that $B_{k,l}'$ is the locus of pairs of points which intersect a fixed general $k$-plane and a fixed general $l$-plane intersecting the $k$-plane in a dimension $(k-1)$-plane and contained in their union.
In order for a pair of points to lie in $A_{i,j}'$ and $B_{k,l}'$, we see that $(i+1)+(k-1)\geq n+1$.
Since $i< j$ and $k\leq l$, we see that $i+j+k+l\geq 2n+2$.
Since $i+j+k+l = 2n$ (as they are complementary codimension), 
\[A_{i,j}'\cdot B_{k,l}' = 0.\]

\medskip\noindent
Next, consider the intersection of $A_{i,j}'$ and $C_{k,l}$.
In order for a pair of points to lie in $A_{i,j}'$ and $C_{k,l}$, we see that $i+1+k\geq n+1$.
Since $i< j$, $k\leq l$, and $i+j+k+l = 2n$ (as they are complementary codimension), we see that $i+k=n$ and $j+l=n$.
Then $n-i = k \leq l = n-j$, so $j \leq i$. 
Since $i<j$,
\[A_{i,j}' \cdot C_{k,l} = 0.\]

\medskip\noindent
Then consider the intersection of $B_{i,j}$ and $B_{k,l}'$.
If a pair of points lies in $B_{i,j}$ and $B_{k,l}'$, then $i+(k-1)\geq n$.
Since $i< j$ and $k\leq l$, we see that $i+j+k+l\geq 2n+2$.
Since $i+j+k+l = 2n$ (as they are complementary codimension), this is impossible so they are disjoint and we get \[B_{i,j}\cdot B_{k,l}' = 0.\]

\medskip\noindent
Now, consider the intersection of $C_{i,j}$ and $C_{k,l}$.
If a pair of points lies in $C_{i,j}$ and $C_{k,l}$, then $i+k\geq n+1$.
Since $i< j$ and $k\leq l$, we see that $i+j+k+l\geq 2n+2$.
Since $i+j+k+l = 2n$ (as they are complementary codimension), this is impossible so they are disjoint and we get \[C_{i,j}\cdot C_{k,l} = 0.\]

\medskip\noindent
Let's now consider the intersections which are not always zero.
In order for a pair of points to lie in $A_{i,j}$ and $A_{k,l}'$, we see that $i+l\geq n$ and $k+1+j\geq n+1$.
Since $i< j$, $k\leq l$, and $i+j+k+l = 2n$ (as they are complementary codimension), we see that $i+l=n$ and $j+k=n$.
Thus, the intersection is disjoint unless $i+l = n$. 
If $i+l=n$, then geometrically we can see a pair in the intersection so the intersection is positive.

\medskip\noindent
Similarly, in order for a pair of points to lie in $B_{i,j}$ and $C_{k,l}$, we see that $i+l\geq n$ and $k+j+1\geq n+1$.
By similar argument to the previous case, the intersection is disjoint unless $i+l = n$. 
If $i+l=n$, then geometrically we can see a pair in the intersection so the intersection is positive.

\medskip\noindent
Finally, in order for a pair of points to lie in $B_{i,j}'$ and $C_{k,l}$, we see that $i+l\geq n$ and $k+j\geq n$.
By similar argument to the previous case, the intersection is disjoint unless $i+l = n$. 
If $i+l=n$, then geometrically we can see a pair in the intersection so the intersection is positive.
\end{proof}

\medskip\noindent
Next, we can use these intersection to compute the nef and effective cones of cycles, i.e. Theorem \ref{thm: cones}.

\begin{reptheorem}{thm: cones}
The extremal rays of $\mathrm{Eff}_k\left(\mathbb{P}^{n[2]}\right)$ are spanned by $\mathcal{B}_{\text{ES},k}$ and  the extremal rays of $\mathrm{Nef}^{k}\left(\mathbb{P}^{n[2]}\right)$ are spanned by $\mathcal{B}_{\text{MS}}^{k}$ for $0\leq k \leq 2n$ where $\mathcal{B}_{\text{MS}}^k$ and  $\mathcal{B}_{\text{ES},k}$ are as in Equations \eqref{MS} and \eqref{ES}, respectively.
\end{reptheorem}

\begin{proof}
\medskip\noindent
We start by noting why the cycles in $\mathcal{B}_{\mathrm{ES},k}$ are effective and why the cycles in $\mathcal{B}_{\mathrm{MS}}^{k}$ are nef.
Since we have given loci giving each cycle in $\mathcal{B}_{\mathrm{ES},k}$, these cycles are effective.
Since the generic point of any cycle in $\mathcal{B}_{\mathrm{MS}}^{k}$ is two reduced points and therefore intersects the two orbits of the inherited $\mathrm{PGL}(n+1)$-action on $\mathbb{P}^{n[2]}$ in the correct dimension, they are all nef \cite{MeS}.

\medskip\noindent
In order to show that these cycles span the respective cones, it only remains to show that these two bases are dual to each other.
Let $A^k$ be the cycles in $A$  of codimension $k$ listed by lexiographic order on their subscript ${i,j}$ and let $A_k$ be the cycles in $A$ of dimension $k$ listed by anti-lexiographic order on their subscript ${i,j}$.
Define this similarly for $B$, $C$, $A'$, and $B'$.
Given this notation, the previous proposition gives the intersection matrix of $\mathcal{B}_{\mathrm{MS}}^k$ and $\mathcal{B}_{\mathrm{ES},k}$, see Table \ref{fig: MS/ES intersections}.
\begin{table}
    \centering
\[\begin{array}{c|c|c|c|}
     & A^k & B^{'k} & C^k\\ \hline
    A'_k & D & 0 & 0 \\ \hline
    C_k & 0 & D' & 0 \\ \hline
    B_k & 0 & 0 & D'' \\ \hline
\end{array}\]
    \caption{The intersection matrix of complementary codimension cycles in the ES and MS bases where $D$, $D'$, and $D''$ are full rank diagonal matrices.}
    \label{fig: MS/ES intersections}
\end{table}
Thus, they are dual and, therefore, span the nef and effectives cones respectively.
\end{proof}

\section{Chern Classes of Tautological Line Bundles}
\medskip\noindent
In this section, we will compute the Chern classes of tautological bundles on $\mathbb{P}^{n[2]}$. 

\medskip\noindent
Let us first recall the construction of tautological vector bundles on the Hilbert scheme.
Recall that the universal family over the Hilbert scheme, denoted $\mathbb{P}^{n[1,2]}$, comes with the projection maps
$\pr_1:\mathbb{P}^{n[1,2]} \to \mathbb{P}^{n}$ and $\pr_2:\mathbb{P}^{n[1,2]} \to \mathbb{P}^{n[2]}$
Given a sheaf $\mathcal{F}$ on $\mathbb{P}^n$, we define the tautological sheaf $\mathcal{F}^{[2]}$ on $\mathbb{P}^{n[2]}$ to be the sheaf $\left(\pr_{2}\right)_* \left(\pr_1\right)^*\left(\mathcal{F}\right)$.
Since $\pr_2$ is a finite flat morphism of degree 2, $\mathcal{O}_{\mathbb{P}^n}(d)^{[2]}$ is a vector bundle of rank 2 on $\mathbb{P}^{n[2]}$ so computing its Chern classes reduces to computing its first and second Chern classes.

\medskip\noindent
In order to compute those Chern classes, we need to intersect them with the MS basis and combine those intersections with the intersection matrix of $\mathcal{B}_{\mathrm{MS},k}$ with $\mathcal{B}_{\mathrm{MS}}^k$.
The intersection matrix of those two sets is the content of the next lemma.

\begin{table}
    \centering
\[\begin{array}{c|c|c|c|}
     & A^k & B^{'k} & C^k\\ \hline
    A_k & I & I & 0 \\ \hline
    B^{'}_k & I & J & I \\ \hline
    C_k & 0 & I & 0 \\ \hline
\end{array}\]
    \caption{The intersection matrix for complementary codimension cycles in the MS basis where each $0$ is a zero matrix, each $I$ is a matrix which is one when the cells have complementary indices and $0$ otherwise, and $J$ is the matrix which is $0$ unless the cycles have complementary indices, in which case it is $2$ for $B'_{i,i}\cdot B'_{n-i,n-i}$ and $1$ otherwise.}
    \label{table: MS intersections}
\end{table}
\begin{lemma}
\label{lem: MS intersections}
The complementary codimension intersections of the MS basis are as in Table \ref{table: MS intersections}.
\end{lemma}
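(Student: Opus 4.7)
The plan is to combine Proposition \ref{prop: zeros} with direct set-theoretic incidence counts for the remaining entries. The proposition immediately delivers the zero blocks $A\cdot C$ and $C\cdot C$, as well as the off-diagonal zeros of the $B'\cdot C$ block. This reduces the task to computing four groups of numbers: the nonzero diagonal of $B'\cdot C$, and the full $A\cdot A$, $A\cdot B'$, and $B'\cdot B'$ blocks. In each of these, I want to argue that complementary-index pairs intersect in one transverse point (two in a single exceptional family), and all other pairs are empty.

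For each of the remaining blocks, I would represent the two classes using two distinct, mutually general flags $F_\bullet$ and $G_\bullet$ (together with the auxiliary second flags $F'_\bullet$, $G'_\bullet$ whenever a primed cycle is involved). A generic element of $A_{i,j}$ is a reduced pair $\{p_1,p_2\}$ with $p_1\in F_i,\ p_2\in F_j$; of $B'_{i,j}$ is a reduced pair with $p_1\in F_i,\ p_2\in F'_j$; of $C_{i,j}$ is a pair with $p_1\in F_j$ and $p_2$ in the $i$-plane spanned by $p_1$ and $F_{i-1}$. For each intersection, I would enumerate the (at most two) possible pairings of points to planes, use the formula $\dim(F_a\cap G_b)=a+b-n$ for two flags in general position to rule out all but the complementary-index case, and observe that the surviving case produces a single transverse intersection point. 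This yields the identity blocks $I$ in the $A\cdot A$, $A\cdot B'$, and $B'\cdot C$ slots of the table.

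The main obstacle is the $B'\cdot B'$ block, where the expected entry of $1$ becomes $2$ precisely on the diagonal cycles $B'_{i,i}$. Unpacking the incidence: matching a pair against the four planes $F_i, F'_j, G_k, G'_l$ produces two symmetric cases, and complementary codimension forces each of them to the same index relation $\{k,l\}=\{n-j,n-i\}$. The constraint $k\le l$ selects a unique admissible case whenever $i<j$, giving intersection $1$. When $i=j$, however, both pairings are admissible — the planes $F_i$ and $F'_i$ enter symmetrically in the definition of $B'_{i,i}$, and likewise for $G_{n-i}$ and $G'_{n-i}$ — and each contributes a single transverse point, so the total intersection is $2$. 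This is exactly the $J$ pattern recorded in the table.

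Finally, transversality at each isolated intersection point can be verified in local affine coordinates by noting that the defining incidence conditions for $A$, $B'$, and $C$ at a reduced pair split into independent linear conditions on the two points, whose tangent spaces together span the ambient tangent space to $\mathbb{P}^{n[2]}$. The only place where this direct check needs care is again the diagonal $B'_{i,i}\cdot B'_{n-i,n-i}$ entry, where one must check that the two geometric solutions remain disjoint on $\mathbb{P}^{n[2]}$ (which they do, as they correspond to genuinely distinct pairs of points) rather than collapsing to a single scheme-theoretic intersection of multiplicity two.
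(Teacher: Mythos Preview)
Your proposal is correct and follows essentially the same route as the paper: invoke Proposition~\ref{prop: zeros} for the zero blocks, then for each of the remaining blocks use set-theoretic incidence with general flags to force complementary indices (via $i+l=n$, $j+k=n$ from $i+j+k+l=2n$), identify the unique reduced pair in each surviving case (two pairs for $B'_{i,i}\cdot B'_{n-i,n-i}$ by the symmetry you describe), and check transversality in the local product chart $\mathbb{A}^n\times\mathbb{A}^n$ at a reduced pair. The paper's argument is structured identically; your version is slightly more explicit about the dimension formula $\dim(F_a\cap G_b)=a+b-n$ and about extracting the $B'\cdot C$ off-diagonal zeros from the proposition, but there is no substantive difference.
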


\begin{proof}
The zero matrices follow from Prop. \ref{prop: zeros}. We now want to consider the non-zero block matrices.
Recall that $A_{i,j}$ is the locus of pairs of points which intersect a fixed general $i$-plane and are contained in a fixed general $j$-plane containing the $i$-plane.
In order for a pair of points to lie in both $A_{i,j}$ and $A_{k,l}$, we must have that $i+l\geq n$ and $j+k\geq n$.
Since $i+j+k+l = 2n$ (as they are complementary codimension), we see that $i+l=n$ and $j+k=n$.
So $A_{i,j}$ and $A_{k,l}$ are disjoint, which gives \[A_{i,j} \cdot A_{k,l} = 0,\] unless $k = n-j$ and $l=n-i$.

\medskip\noindent
For $A_{i,j}\cdot A_{n-j,n-i}$,
we see that the $i$-plane and the $(n-j)$-plane are disjoint.
Thus, the only pair of points in the intersection is the pair where one point is the intersection of the $i$-plane and the $(n-i)$-plane and the other point is the intersection of the $j$-plane and the $(n-j)$-plane.
In order to show that the intersection is $1$, it suffices to show that the intersection is transverse.
Since the pair of points is reduced, there are local charts for the Hilbert scheme of the form $\mathbb{A}^n \times \mathbb{A}^n$.
In these charts, the tangent spaces to each cycles are $\mathbb{A}^i \times \mathbb{A}^j$ and $\mathbb{A}^{n-j} \times \mathbb{A}^{n-i}$.
Since the two cycles are given by general fixed flags, we know that the elements of each flag are transverse.
It follows, that the intersection is transverse as needed.

\medskip\noindent 
Similarly, recall that $B_{i,j}'$ is the locus of pairs of points which intersect a fixed general $i$-plane and a fixed general $j$-plane intersecting the $i$-plane in a dimension $(i-1)$-plane and contained in their union.
In order for a pair of points to lie in both $B_{i,j}'$ and $B_{k,l}'$, we must have that $i+l\geq n$ and $j+k\geq n$.
The same reasoning as the previous gives \[B_{i,j}' \cdot B_{k,l}' = 0,\] unless $k = n-j$ and $l=n-i$.
Similarly, in the case of $B_{i,j}'\cdot B_{n-j,n-i}'$ with $i\neq j$, the same reasoning as the previous case shows they intersect transversely at a single pair.
In the case of $B_{i,i} \cdot B_{n-i,n-i}'$, there is a choice of which $i$-plane intersections which $(n-i)$-plane so the same reasoning as the previous case shows that they intersect transversely at two pairs of points.

\medskip\noindent 
Now consider $A_{i,j}\cdot B_{k,l}'$.
In order for a pair of points to lie in both $A_{i,j}$ and $B_{k,l}'$, we must have that $i+l\geq n$ and $j+k\geq n$.
The same reasoning as the previous case gives \[A_{i,j} \cdot B_{k,l}' = 0,\] unless $k = n-j$ and $l=n-i$.
Similarly, in the case of $A_{i,j}\cdot B_{n-j,n-i}'$, the same reasoning as the previous case shows they intersect transversely at a single pair since $i\neq j$.

\medskip\noindent
Finally, recall that $C_{k,l}$ is the locus of pairs of points which are linearly dependent with $k$ points and are contained in an $l$-plane which contains those points.
In order for a pair to lie in both $B_{i,j}'$ and $C_{k,l}'$, we must have that $i+l\geq n$ and $j+k\geq n$.
The same reasoning as before gives \[B_{i,j}' \cdot C_{k,l} = 0,\] unless $k = n-j$ and $l=n-i$.
In the case of $C_{i,j}\cdot B_{n-j,n-i}'$, we again see that there is a single pair in the intersection which consists of the point of intersection of the $j$-plane with the $(n-j)$-plane and the point of intersection of the $i$-plane with the $(n-i)$-plane.
%
Since the $j$-plane defining $C_{i,j}$ and the $(n-j)$-plane defining $B_{n-j,n-i}'$ are general, the $i$-plane determined by $C_{i,j}$ and the point of intersection is general.
Given that fact and that the $(n-i)$-plane defining $B_{n-j,n-i}'$ is general, it follows that this intersection is transverse as well.
\end{proof}

\medskip\noindent
This lemma gives us the tools we need to prove Theorem \ref{thm: chern}. 

\begin{reptheorem}{thm: chern}
In terms of the MS basis, we have 
\[c_1\left(\mathcal{O}_{\mathbb{P}^n}(d)^{[2]}\right) = (d-1)A_{n-1,n}+C_{n-1,n} \text{ and}\] 
\[c_2\left(\mathcal{O}_{\mathbb{P}^n}(d)^{[2]}\right) =\binom{d}{2}B_{n-1,n-1}'+dC_{n-1,n-1}\]
for $n>1$.
If $n=1$, the classes are the same without the second terms.
In particular, they are effective for all $n$.
\end{reptheorem}

\begin{proof}
For the $n>2$, case, it suffices to show last line of the tables in Table \ref{table: taut intersections} (the other lines follow from the previous lemmas).
\begin{table}
    \centering
\[\begin{array}{c|c|c|}
     & A_{0,1} & B'_{0,1}\\ \hline
    A_{n-1,n} & 1 & 1\\ \hline
    C_{n-1,n} & 0 & 1\\ \hline\hline
    c_1\left(\mathcal{O}_{\mathbb{P}^n}(d)^{[2]}\right) & d-1 & d \\ \hline
\end{array}\;\;\;\;
\begin{array}{c|c|c|c|c|}
     & A_{0,2} & B'_{0,2} & B_{1,1} & C_{1,1}\\ \hline
    A_{n-2,n} & 1  & 1 & 0 & 0\\ \hline
    B'_{n-1,n-1} & 0  & 0 & 2 & 1\\ \hline
    C_{n-1,n-1} & 0  & 0 & 1 & 0\\ \hline
    C_{n-2,n} & 0  & 1 & 0 & 0\\ \hline\hline
    c_2\left(\mathcal{O}_{\mathbb{P}^n}(d)^{[2]}\right) & 0  & 0 & d^2 & \binom{d}{2}\\ \hline
\end{array}\]
    \caption{The intersection matrices for the the tautological Chern classes and the MS basis in dimensions $1$ and $2$ with the MS basis in codimensions $1$ and $2$.}
    \label{table: taut intersections}
\end{table}

\medskip\noindent
In this case, the first Chern class is the class of the locus of pairs of points which lie on a member of a general pencil of hypersurfaces of degree $d$ and the the first second class is the class of the locus of pairs of points which lie on a general hypersurface of degree $d$. For more background on this, see \cite{MeS}.

\medskip\noindent
Recall that $A_{0,1}$ is the pairs which contain a point and are contained in a general line containing that point.
Given a general pencil of degree $d$ hypersurfaces, containing that fixed point determines a single member of that pencil.
Since the pencil, point, and line are general, that member of the pencil intersects the line transversely at $d$ distinct points.
Since the hypersurface and line intersect transversely at the fixed point, the second point of the pair must be one of the other $d-1$ points of intersection.
Thus, the intersection occurs at $d-1$ pairs of points. 
The transversality at each of those points follows from the transversality of the line and the determined member of the pencil. Thus. \[c_1\left(\mathcal{O}_{\mathbb{P}^n}(d)^{[2]}\right) \cdot A_{0,1} = d-1.\]

\medskip\noindent
Recall that $B'_{0,1}$ is the pairs which contain a point and intersect a general line (which by its generality does not contain that fixed point).
Again, given the general pencil of degree $d$ hypersurfaces, containing that fixed point determines a single member of that pencil.
Since the pencil, point, and line are general, that member of the pencil intersects the line transversely at $d$ distinct points.
The second point of the pair must be one of the $d$ points of intersection.
Thus, the intersection occurs at $d$ pairs of points. 
The transversality at each of those points follows from the transversality of the line and the determined member of the pencil. Thus. \[c_1\left(\mathcal{O}_{\mathbb{P}^n}(d)^{[2]}\right) \cdot B'_{0,1} = d,\]
and we have determined the first Chern class.

\medskip\noindent
Recall that $A_{0,2}$ is the pairs which contain a point and are contained in a general plane containing that point.
A general hypersurface of degree $d$ doesn't contain the fixed point so
\[c_2\left(\mathcal{O}_{\mathbb{P}^n}(d)^{[2]}\right) \cdot A_{0,2} = 0.\]

\medskip\noindent
Similarly, recall that $B'_{0,2}$ is the pairs which contain a point and are intersect a general plane.
A general hypersurface of degree $d$ doesn't contain the fixed point so
\[c_2\left(\mathcal{O}_{\mathbb{P}^n}(d)^{[2]}\right) \cdot B'_{0,2} = 0.\]

\medskip\noindent
Again similarly, recall that $B'_{1,1}$ is the pairs which intersect each of a pair of intersecting lines and are contained in their union.
A general hypersurface of degree $d$ intersects each of those lines transversally at $d$ distinct points.
Any pair in the intersection must contain one of each set of intersection points.
Thus, the intersection occurs at $d^2$ pairs of points. 
The transversality at each of those points follows from the transversality of the lines and the hypersurface. Thus. \[c_2\left(\mathcal{O}_{\mathbb{P}^n}(d)^{[2]}\right) \cdot B'_{1,1} = d^2.\]

\medskip\noindent
Finally, recall that $C_{1,1}$ is the pairs which are contained in a general line.
A general hypersurface of degree $d$ intersects that line transversally at $d$ distinct points.
Any pair in the intersection must contain two of these intersection points.
Thus, the intersection occurs at $\binom{d}{2}$ pairs of points. 
The transversality at each of those points follows from the transversality of the line and the hypersurface. Thus. \[c_2\left(\mathcal{O}_{\mathbb{P}^n}(d)^{[2]}\right) \cdot C_{1,1} = \binom{d}{2},\]
and we have computed the second Chern class.

\medskip\noindent
Note, in the $n=1$ and $n=2$, only some of these intersections make sense as only some of the basis classes exist, but those that exist have the same intersections by the same arguments. For $n=2$, this does not change the class, but for $n=1$ it changes the class by dropping the second terms, as those basis elements do not exist.
\end{proof}

\section{Application to secant varieties}
\noindent 
In this section, we apply the results of the previous section to compute the degree of the (1st) secant variety of a complete intersection subvariety of projective space which is not $1$-defective. 
These methods are likely applicable to other related questions, such as the number of secants to a complete intersection of dimension $n$ through a general point of $\mathbb{P}^{2n+1}$.

\medskip\noindent
We first recall some terminology around secant varieties.
We will define the notions only for the secant variety of lines, but they are defined in full generality for the secant variety of $k$-planes.
The $1$\textit{-th secant variety}, $\mathrm{Sec}_1(X) =\mathrm{Sec}(X)$, is the (closure of the) union of lines that intersect $X$ at $2$ or more points.
The expected dimension of $\mathrm{Sec}(X)$ is $2\dim(X)+1$, and a variety is called $1$-defective if $\dim\left(\mathrm{Sec}(X)\right)<2\dim(X)+1$.
The $1$\textit{-th secant order} of a variety, which is not $1$-defective, is the number of secant lines through a general point of the $\mathrm{Sec}(X)$; denote this by $\mu_1(X)$.

\begin{reptheorem}{thm: sec}
Let $X$ be a subvariety of $\mathbb{P}^n$ of dimension $m$ with $2m+1<n$ which is not $1$-defective and which is the complete intersection of hypersurfaces of degree $d_1$, $\dots$, and $d_{n-m}$.
The degree of $\mathrm{Sec}(X)$ is \[\deg\left(\mathrm{Sec}(X)\right) = \frac{1}{\mu_1(X)} \sum_{k=m+1}^{n-m} \left(\prod_{j \in \{i_1,\dots,i_k\}} \left(\prod_{l\not \in \{i_1,\dots,i_k\}} \binom{d_{j}}{2} d_l 2^{k-1-m}\right)\right)\] 
where the products are taken over all choices of $k$ integers from the set $\{1,\dots,n-m\}$.
\end{reptheorem}
\noindent
In order to prove this result, we first need the following lemma.

\begin{lemma}
\label{lem: products}
We have the following intersections
\begin{align*}
    B_{n-1,n-1}^{'k} &= 2^{k-1}\left(B_{n-k,n-k}'+ \sum_{i=1}^{k-1}    \left(B_{n-k-i,n-k+i}'-A_{n-k-i,n-k+i}\right)\right) \text{ if } 0< 2k-1\leq n\\
    B_{n-1,n-1}^{'k}&= 2^{k-1}\left(B_{n-k,n-k}'+ \sum_{i=1}^{n-k}    \left(B_{n-k-i,n-k+i}'-A_{n-k-i,n-k+i}\right)\right) \text{ if }  n\leq  2k-1<2n
\end{align*}
\end{lemma}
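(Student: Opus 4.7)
The proof is by induction on $k$. The base case $k=1$ is immediate because the summation is empty and the formula reduces to $B'_{n-1,n-1}=B'_{n-1,n-1}$. For the inductive step, I factor
\[
B_{n-1,n-1}^{'k} = B'_{n-1,n-1}\cdot B_{n-1,n-1}^{'(k-1)}
\]
and substitute the expansion of $B_{n-1,n-1}^{'(k-1)}$ given by the inductive hypothesis. This reduces the task to computing, in the MS basis, the codimension-$2k$ products $B'_{n-1,n-1}\cdot B'_{n-k+1-i,n-k+1+i}$ and $B'_{n-1,n-1}\cdot A_{n-k+1-i,n-k+1+i}$ for each $i$ in the range supplied by the hypothesis, then reassembling.

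To expand each such product in the MS basis, I use the duality between $\mathcal{B}_{\text{MS}}$ and $\mathcal{B}_{\text{ES}}$ from Theorem \ref{thm: cones}. For a codimension-$2k$ class $\alpha$ and any $\gamma\in \mathcal{B}_{\text{MS}}^{2k}$, the coefficient of $\gamma$ in $\alpha$ equals $\alpha\cdot\gamma^{\vee}$ (divided by the diagonal value for $\gamma\cdot\gamma^{\vee}$), where $\gamma^{\vee}$ is the ES-dual of $\gamma$; these diagonal values are supplied by Proposition \ref{prop: zeros} and Lemma \ref{lem: MS intersections}, and the majority of the pairings vanish by the zero-blocks in Proposition \ref{prop: zeros}. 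Each surviving intersection $\alpha\cdot\gamma^{\vee}$ is then computed geometrically using general-position representatives: the reduced pairs contribute configurations of one point in each of a pair of complementary linear subspaces cut out by the $B'$ conditions, while the nonreduced pairs contribute double points supported on the intersection of those subspaces with tangent directions in the union of tangent hyperplanes. Standard transversality arguments (or a controlled excess-intersection contribution) give each intersection number as an explicit integer.

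Substituting these MS expansions back into $B'_{n-1,n-1}\cdot B_{n-1,n-1}^{'(k-1)}$ and collecting terms produces the right-hand side of the claim. The prefactor doubles from $2^{k-2}$ to $2^{k-1}$ because the diagonal $J$-entry $B'_{i,i}\cdot B'_{n-i,n-i}=2$ of Lemma \ref{lem: MS intersections} injects an extra factor of $2$ into the leading $B'_{n-k,n-k}$ term; the new $B'_{n-k-i,n-k+i}-A_{n-k-i,n-k+i}$ terms arise from intersecting $B'_{n-1,n-1}$ with the existing $B'$ and $A$ terms of the hypothesis, with the signed $-A$ corrections exactly accounting for the nonreduced components of $B'_{n-1,n-1}\cdot B'_{a,b}$ that are not captured by the $B'$ part. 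The two summation regimes in the statement reflect whether the index $n-k-i$ remains nonnegative: for $2k-1\le n$ this holds for all $i\le k-1$, while for $2k-1>n$ it fails past $i=n-k$, forcing those cycles to vanish from the sum. The main obstacle is the careful geometric intersection analysis producing the correct signs and multiplicities of the $-A_{n-k-i,n-k+i}$ corrections, and verifying (by the vanishing results in Proposition \ref{prop: zeros}) that no further MS basis classes enter the expansion.
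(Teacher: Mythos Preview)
Your inductive skeleton matches the paper's exactly: base case $k=1$, then multiply the expansion of $B_{n-1,n-1}^{\prime(k-1)}$ by one more copy of $B'_{n-1,n-1}$ and reassemble. Where you diverge is in how the pairwise products are actually computed. The paper does \emph{not} expand $B'_{n-1,n-1}\cdot B'_{a,b}$ directly via ES--MS duality. Instead it first rewrites each $B'_{a,b}$ in the BB basis, obtaining
\[
B'_{n-k-i,n-k+i}=\tfrac{1}{2}B_{n-k-i,n-k+i}+A_{n-k-i,n-k+i}\quad(i\ge1),\qquad
B'_{n-k,n-k}=\tfrac{1}{2}B_{n-k,n-k}+2C_{n-k,n-k},
\]
and then computes the geometrically much cleaner products $B'_{n-1,n-1}\cdot A_{a,b}$, $B'_{n-1,n-1}\cdot B_{a,b}$, and $B'_{n-1,n-1}\cdot C_{a,a}$. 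This detour is the main idea, because intersecting $B'_{n-1,n-1}$ with the unprimed $B_{a,b}$ (a locus of \emph{nonreduced} schemes) forces the support into a fixed linear subspace and gives a single-class answer, whereas $B'_{n-1,n-1}\cdot B'_{a,b}$ done directly mixes reduced and nonreduced contributions and is harder to control.

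Your proposal is not wrong in principle, but it is underspecified at exactly the hard point. The multiplicities here are \emph{not} obtained by ``standard transversality'': the paper explicitly notes that the intersections $C_{k-i,k+i}\cdot B_{n-k-i,n-k+i}$ and the analogous $B'\cdot B$ products require a computation in local coordinates to see the factor~$2$. These are nontransverse (the loci meet along a nonreduced scheme), so appealing to generic transversality does not suffice. There is also a confusion in your account of the doubling: you invoke the MS--MS entry $B'_{i,i}\cdot B'_{n-i,n-i}=2$ from Lemma~\ref{lem: MS intersections}, but if you are expanding via ES--MS duality, the relevant diagonal entries are those of the ES--MS matrix in Table~\ref{fig: MS/ES intersections}, and the paper only records those as ``full rank diagonal'' without asserting they are all~$1$ (indeed the $B\cdot C$ diagonals equal~$2$, which is computed inside this very proof). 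So either route forces you through the same local multiplicity-$2$ computation; your sketch elides it.
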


\medskip\noindent
\begin{proof}[Proof of Lemma \ref{lem: products}]
In this proof, fix $F_*$, $F_*'$, and $F_*''$ to be three distinct flags of linear subspaces of $\mathbb{P}^n$.

\medskip\noindent
We proceed by induction on $k$ to prove the first formula, which we will then use to prove the second formula.
In the case that $k=1$, the first formula reduces to
\[B_{n-1,n-1}' = 2^{1-1}\left(B_{n-1,n-1}'+ \sum_{i=1}^{0}    \left(B_{n-1-i,n-1+i}'-A_{n-1-i,n-1+i}\right)\right) = B_{n-1,n-1}'\]
so the base case trivially holds.

\medskip\noindent
For the induction step, we need the intersections 
$B_{n-1,n-1}'\cdot A_{n-k-i,n-k+i}$ for $1 \leq i \leq k$ and $B_{n-1,n-1}'\cdot B_{n-k-i,n-k+i}'$ for $0 \leq i \leq k$.

\medskip\noindent
Any pair $Z$ in the intersection $B_{n-1,n-1}'\cdot A_{n-k-i,n-k+i}$ needs to satisfy the five conditions: $\mathrm{supp}(Z)\cap F_{n-1} \neq \varnothing$, $\mathrm{supp}(Z)\cap F_{n-1}' \neq \varnothing$, $Z \subset F_{n-1}\cup F_{n-1}'$, $\mathrm{supp}(Z)\cap F_{n-k-i}''\neq \varnothing$,  and $Z \subset F_{n-k+i}''$.
From the first four conditions, we can see that either $\mathrm{supp}(Z) \cap (F_{n-1}\cap F_{n-1-i}'') \neq \varnothing$ or $\mathrm{supp}(Z) \cap (F_{n-1}'\cap F_{n-k-i}'') \neq \varnothing$.
In the first case, for $Z$ to be in the intersection it suffices for $Z \cap (F_{n-1}'\cap F_{n-k+i}'') \neq \varnothing$ and $Z \subset (F_{n-1}\cap F_{n-k-i}'')\cup (F_{n-1}'\cap F_{n-k+i}'')$.
Since these are linear subspaces of $\mathbb{P}^n$ of dimension $n-1-k-i$ and $n-1-k+i$ which overlap in the subspace $F_{n-1}\cap F_{n-1}' \cap F_{n-k-i}''$ of dimension $n-2-k-i$, the intersection is some multiple of the class $B'_{n-1-k-i,n-1-k+i}$.
However, the transversality of the linear subspaces involved, since they are general, implies that this intersection is transverse so the multiple is just 1. 
The other case is similar so
\[B_{n-1,n-1}'\cdot A_{n-k-i,n-k+i} = 2B'_{n-1-k-i,n-1-k+i}.\]

\medskip\noindent
The second type of intersection is much more difficult to see geometrically so we wish to write $B_{n-1-i,n-1+i}'$ in the BB basis.
By Tables \ref{fig: MS/ES intersections} and \ref{table: MS intersections}, it follows that if $1 \leq i \leq k$ \[B_{n-k-i,n-k+i} = l\left(B_{n-k-i,n-k+i}'-A_{n-k-i,n-k+i}\right)\] with $l = C_{k-i,k+i}\cdot B_{n-k-i,n-k+i}$.
Any pair $Z$ in that intersection needs to satisfy the five conditions: $\dim\left(\mathrm{Span}(Z,F_{k-i-1})\right) \leq k-i$, $Z\subset F_{k+i}$, $\mathrm{supp}(Z) \subset F'_{n-k-i}$, $Z \subset F'_{n-k+i+1}$, and $Z$ is non-reduced.
From the second and third conditions, we can see that $Z$ must be supported on $F_{k+i} \cap F'_{n-k-i}$, which is a single point so the support of any such $Z$ is fixed.
In particular, this fixes the $(k-i)$-dimensional space $\mathrm{Span}(Z,F_{k-i-1})$ which must contain $Z$.
Then $Z$ must be contained in the $\mathrm{Span}(Z,F_{k-i-1}) \cap F_{n-k+i-1}$, which is a single line containing the point where $Z$ must be supported.
There is a single non-reduced scheme at a fixed point which is contained in a line so $Z$ is unique.
A computation is local coordinates shows that this multiplicity is 2.
Thus, $l=2$ and \[B_{n-k-i,n-k+i} = 2\left(B_{n-k-i,n-k+i}'-A_{n-k-i,n-k+i}\right).\]

\medskip\noindent
Similarly, $B_{n-k,n-k} = l\left(B_{n-k,n-k}'-2C_{n-k,n-k}\right)$ for some positive integer $l = C_{k,k}\cdot B_{n-k,n-k}$.
An almost identical analysis again yields $l=2$ and
\[B_{n-k,n-k} = 2\left(B_{n-k,n-k}'-2C_{n-k,n-k}\right)\]

\medskip\noindent
Solving for $B_{n-k-i,n-k+i}'$, gives $B_{n-k-i,n-k+i}' = \frac{1}{2}B_{n-k-i,n-k+i}+A_{n-k-i,n-k+i} $ for $1\leq i\leq n$ and $B_{n-k,n-k}'= \frac{1}{2}B_{n-k,n-k}+2C_{n-k,n-k}$.
Since we already know $B_{n-1,n-1}' \cdot A_{n-k-i,n-k+i}$ it suffices to know $B_{n-1,n-1}'\cdot B_{n-k-i,n-k+i}$ for $0\leq i\leq k$ and $B_{n-1,n-1}'\cdot C_{n-k,n-k}$.

\medskip\noindent
Any pair $Z$ in the intersection $B_{n-1,n-1}'\cdot B_{n-k-i,n-k+i}$ needs to satisfy the six conditions: $\mathrm{supp}(Z)\cap F_{n-1} \neq \varnothing$, $\mathrm{supp}(Z)\cap F_{n-1}' \neq \varnothing$, $Z \subset F_{n-1}\cup F_{n-1}'$, $\mathrm{supp}(Z) \subset F''_{n-k-i}$, $Z \subset F''_{n-k+i+1}$, and $Z$ is non-reduced.
Since $Z$ is non-reduced, it must be supported on the intersection of $F_{n-1}\cap F_{n-1}' \cap F_{n-k-i}''$. 
Since the span of $F_{n-1}$ and $F_{n-1}'$ is the whole space, any double point supported on their intersection in contained in them (and intersects them both).
So for $Z$ to be in the intersection it suffices for it to be non-reduced, supported on $F_{n-1}\cap F_{n-1}' \cap F_{n-k-i}''$, and contained in $F''_{n-k+i+1}$.
Since $F_{n-1}\cap F_{n-1}' \cap F_{n-k-i}''$ is a $(n-k-i-2)$-dimensional space contained inside $F''_{n-k+i+1}$, this means the intersection is some multiple of $B_{n-k-i-2,n-k+i}$.
This multiplicity is given by $C_{k-i,k+i+2}\cdot B_{n-1,n-1}'\cdot B_{n-k-i,n-k+i}$.
A computation in local coordinates shows that this is 2.
Thus, \[B_{n-1,n-1}'\cdot B_{n-k-i,n-k+i} = 2B_{n-k-i-2,n-k+i}.\]

\medskip\noindent
Any pair $Z$ in the intersection $B_{n-1,n-1}'\cdot C_{n-k,n-k}$ needs to satisfy the four conditions: $\mathrm{supp}(Z) \cap F_{n-1} \neq \varnothing$, $\mathrm{supp}(Z)\cap F_{n-1}' \neq \varnothing$, $Z \subset F_{n-1}\cup F_{n-1}'$, and $Z \subset F''_{n-k}$.
Combining the fourth condition into the other lets us restate these as $\mathrm{supp}(Z)\cap F_{n-1}\cap F''_{n-k} \neq \varnothing$, $\mathrm{supp}(Z)\cap F_{n-1}'\cap F''_{n-k} \neq \varnothing$, $Z \subset (F_{n-1}\cap F''_{n-k})\cup (F_{n-1}'\cap F''_{n-k})$.
Since $F_{n-1}\cap F''_{n-k}$ and $F_{n-1}'\cap F''_{n-k} $ are dimension  $(n-k-1)$ and intersect in dimension $(n-k-2)$, this is up to multiple the class $B_{n-k-1,n-k-1}'$.
Again, the transversality of the linear spaces shows the transversality of the intersection. Thus,
\[B_{n-1,n-1}'\cdot C_{n-k,n-k} = B_{n-k-1,n-k-1}'.\]

\medskip\noindent
Putting those intersections together gives 
\begin{align*}
    B_{n-1,n-1}'\cdot B_{n-k,n-k}' 
    &=2\left(B_{n-k-2,n-k}'-A_{n-k-2,n-k}+B_{n-k-1,n-k-1}'\right)\;\; \text{ and }\\
    B_{n-1,n-1}'\cdot B_{n-k-i,n-k+i}' 
    &=2\left(B_{n-k-i-1,n-k-i-1}'+B_{n-k-i-2,n-k}'-A_{n-k-i-2,n-k}\right).
\end{align*}
With these pairwise intersections calculated, the first formula follows by several substitutions and one re-indexing of a summation. 

\medskip\noindent
The second formula is proved by induction on $k-\lfloor\frac{n+1}{2}\rfloor$.
The base case follows is an example of the first formula.
The induction step uses the same pairwise intersections as the previous step but noting that any cycle whose lower index drops below $0$ is a zero cycle.
\end{proof}

\medskip\noindent
With the lemma in hand, we can proceed with the proof of the theorem.

\begin{proof}[Proof of Theorem \ref{thm: sec}]
We first notice that the degree of the secant variety times the $1$-st secant order of the variety is the number of points on a general dimension $n-2m-1$ linear subspace in $\mathbb{P}^n$ which lie on a secant line to the variety.
This is equivalent to the number of pairs of points on $X$ which are linearly dependent with the linear space.
This can be computed as the intersection of the locus of pairs of points on $X$, which we denote $Y$, with the class $C_{n-2m,n}$.
Since $X$ is a complete intersection, a pair of points lying on $X$ is equivalent to them lying on each hypersurface cutting it out and each one of those conditions cuts the dimension of $Y$ by $2$.
In other words, $Y = \prod_{i=1}^{n-m} c_2\left(\mathcal{O}_{\mathbb{P}^n}(d_i)\right)$.
Thus, \[\deg\left(\mathrm{Sec}(X)\right)\mu_1(X) = C_{n-2m,n}\cdot \prod_{i=1}^{n-m} c_2\left(\mathcal{O}_{\mathbb{P}^n}(d_i)\right).\] 

\medskip\noindent
Each term of that product is of the form $\binom{d_i}{2}B_{n-1,n-1}'+d_i C_{n-1,n-1}$. 
If $2k-1 \leq n$, then this product has the form 
\[B_{n-1,n-1}^{'k}\cdot C_{n-1,n-1}^{n-m-k} = 2^{k-1}\left(B_{n-k,n-k}'+ \sum_{i=1}^{k-1}    \left(B_{n-k-i,n-k+i}'-A_{n-k-i,n-k+i}\right)\right)\cdot C_{n-1,n-1}^{n-m-k} \]
By a similar argument to the proof of previous lemma, $A_{n-k-i,n-k+i}\cdot C_{n-1,n-1} = A_{n-k-i-1,n-k+i-1}$.
Using this product and the product $B_{n-k-i,n-k+i}'\cdot C_{n-1,n-1}$ from the proof of the previous lemma, it follows by a simple induction that this reduces to
\[B_{n-1,n-1}^{'k}\cdot C_{n-1,n-1}^{n-m-k} = 2^{k-1}\left(B_{m,m}'+ \sum_{i=1}^{k-1} \left(B_{m-i,m+i}'-A_{m-i,m+i}\right)\right).\]
If $n< 2k-1 <2n$, we similarly get that it reduces to
\[B_{n-1,n-1}^{'k}\cdot C_{n-1,n-1}^{n-m-k} = 2^{k-1}\left(B_{m,m}'+ \sum_{i=1}^{n-k}  \left(B_{m-i,m+i}'-A_{m-i,m+i}\right)\right).\]

\medskip\noindent
We are interested in the triple product of $C_{n-2m,n}$ with each of those products.
Since $C_{n-2m,n}$ intersects every complementary codimension class in the MS basis except $B_{0,2m}'$ in zero and intersects that class in 1, 
\[C_{n-2m,n}\cdot B_{n-1,n-1}^{'k}\cdot C_{n-1,n-1}^{n-m-k} =
\begin{cases}
2^{k-1}B_{0,0}' &\text{if }0<2k-1\leq n \text{ and } m\leq k-1\\
2^{k-1}B_{0,0}' &\text{if }n<2k-1< 2n \text{ and } m\leq n-k\\
0 &\text{otherwise}
\end{cases}\]
This can be further simplified to
\[C_{n-2m,n}\cdot B_{n-1,n-1}^{'k}\cdot C_{n-1,n-1}^{n-m-k} =
\begin{cases}
2^{k-1}B_{0,0}' &\text{if }m+1 \leq k \leq n-m\\
0 &\text{otherwise}
\end{cases}\]


\medskip\noindent
We want to use these triple products in our original product \[C_{n-2m,n}\cdot \prod_{i=1}^{n-m} c_2\left(\mathcal{O}_{\mathbb{P}^n}(d_i)\right).\]
which expands to
\[\sum_{k=0}^{n-m} \prod_{j \in \{i_1,\dots,i_k\}} \prod_{l\not \in \{i_1,\dots,i_k\}}\binom{d_{j}}{2} d_l B_{n-1,n-1}^{'k}\cdot C_{n-1,n-1}^{n-m-k}\cdot C_{n-2m,n}.\]
Using the formula for the triple intersections, we finally get
\[\deg\left(\mathrm{Sec}(X)\right)\mu_1(X) =\sum_{k=m+1}^{n-m} \left(\prod_{j \in \{i_1,\dots,i_k\}} \left(\prod_{l\not \in \{i_1,\dots,i_k\}} \binom{d_{j}}{2} d_l 2^{k-1}\right)\right).\]

\end{proof}

\bibliographystyle{alphanum}
\bibliography{main}
\end{document}